\newtheorem{example}{Example}
\newtheorem{definition}{Definition}
\newtheorem{theorem}{Theorem}
\newtheorem{lemma}{Lemma}
\newtheorem{corollary}{Corollary}
\newtheorem{procedure}{Procedure}
\title{Constructing Non-isomorphic Signless Laplacian Cospectral Graphs}
\author{Supriyo Dutta \\ Department of Mathematics \\ Indian Institute of Technology Jodhpur \\ Email: \texttt{dosupriyo@gmail.com}}
\date{}
\begin{document}

	\maketitle
		
	\begin{abstract}
		In this article, we generate large families of non-isomorphic and signless Lalacian cospectral graphs using partial transpose on graphs. Our constructions are significantly powerful. More than $70\%$ of non-isomorphic signless-Laplacian cospectral graphs can be generated with partial transpose when number of vertices is $\le 8$. We have also produced numerous examples of non-isomorphic signless Laplacian cospectral graphs.
	\end{abstract}

	\section{Introduction}
		
		The graph isomorphism problem is a long standing open problem in graph theory. Two graphs $G = (V(G), E(G))$ and $H = (V(H), E(H))$ are isomorphic if there is a bijective mapping $f: V(G) \rightarrow V(H)$ such that $(f(u), f(v)) \in E(H)$ if and only if $(u, v) \in E(G)$. The graph isomorphism problem is to determine whether two given graphs are isomorphic or not. This problem was initially attempted with the help of graph spectra. The spectral graph theory elaborates the properties of graphs and eigenvalues of a matrix $M$ related to the graph. For instance, $M$ may be the adjacency matrix, Laplacian matrix, and signless Laplacian matrix. The spectra of a matrix $M$ is the multiset of its eigenvalues, which is denoted by $\Lambda(M)$. The $M$-spectra of a graph is the spectra of the corresponding $M$ matrix. Graphs with equal $M$-spectra are called $M$-cospectral. A graph is determined by its $M$-spectra if there is no other non-isomorphic graph with equal $M$-spectra.
		
		A central question in spectral graph theory \cite{bapat2010graphs} arises to know the sets of graphs which are determined by their eigenvalues \cite{van2003graphs}. This question was originated from Chemistry. Initially, it was believed that every graph is determined by its eigenvalues. But, a pair of cospectral trees was exhibited \cite{von1957spektren} in 1956. Nowadays a number of constructions of cospectral graphs are known for different $M$. A detailed list is available in the reference of \cite{van2003graphs}. Computer estimation suggests that almost all the graphs are determined by their eigenvalues. But till date there is no efficient method to construct all non-isomorphic graph of a given order. Hence, there is a scope of research to develop new methods in this field which is expected to be accepted. Another important motivation to this problem comes from complexity theory. It is still unknown whether the graph isomorphism problem is computationally a hard or easy problem, in general. But checking whether two graphs are cospectral can be done in polynomial time. Recent works in this direction \cite{babai2016graph} renew the interest for these questions. 
		
		It was believed that the eigenvalues of signless Laplacian matrix is more efficient in studying properties of graphs than other matrices \cite{cvetkovic2009towards}. The signless Laplacian matrix $Q(G)$ of a graph $G$ is defined by $Q(G) = D(G) + A(G)$, where $A(G)$ and $D(G)$ are the adjacency, and degree matrices, respectively. In case of adjacency matrix, there are a number of well known methods for generating non-isomorphic cospectral graphs in literature, for instance, Godsil McKay switching \cite{godsil1982constructing}. With the help of product graphs, it can be shown that exponentially large classes of non-isomorphic $Q$-cospectral graphs exist \cite{carvalho2017exponentially}. 
		
		In quantum mechanics and information theory we use the idea of Partial Transpose (PT) \cite{peres1996, horodecki1997} for detecting entanglement. A graph theoretic counterpart of partial transpose was developed in \cite{wu2006conditions} and further developed by \cite{hildebrand2008combinatorial, dutta2016bipartite}. It initiate another idea of graph switching which is  foundationally different from Godsil-McKay switching. As far as our knowledge, it is not a variant of any other switching techniques available in the literature. Earlier, we have employed this method for generating cospectral graphs with respect to the adjacency matrices \cite{dutta2018construction}. Here, we find an efficient method for generating large classes of non-isomorphic $Q$-cospectral graphs using partial transpose. It is a promising candidate in this ground as it generates more than $70\%$ of these graphs when $|V(G)| \leq 8$. Also, these graphs follow a particular pattern which can be easily generalised for higher ordered graphs. Here, we utilize the connections between partial transpose and TU subgraphs of a graph. It makes this work purely graph theoretic and a number of constructions have no trivial matrix counterpart. 
		
		This article is distributed as follows. In the section 2, we briefly discuss all preliminary ideas related to this article. Here, we shall mainly concentrate on the coefficients of the characteristic polynomial of signless Laplacian matrix in terms of TU subgraphs. In the section 3, we introduce the idea of partial transpose of a graph and we describe a number of its properties to provide a clear idea of this switching to the readers. We compare partial transpose with Godsil-McKay switching. How many non-isomorphic $Q$-cospectral graphs are determined by partial transpose? We provide an estimate in the section 4. In the section 5, we state a number of theorems for generating these graphs. Every theorem follows a particular pattern in the structures of generated graphs. Then we conclude with a number of future problems in this direction.

	\section{The coefficients of Q-polynomial}
		
		Throughout this article $n$ and $m$ denote the number of vertices and the number of edges of a graph, respectively. Eigenvalues of a matrix are roots of its characteristic equation. Here, we call the characteristic polynomial of $Q(G)$ as the $Q$-polynomial of the graph $G$ which is denoted and defined by,
		\begin{equation}
		Q_G(\lambda) = \operatorname{det}(Q(G) - \lambda I) = \sum_{j = 0}^n p_j \lambda^{n - j} = p_0 \lambda^n + p_1 \lambda^{n - 1} + \dots p_n.
		\end{equation}
		The union of two given graphs $G_1 = (V(G_1), E(G_1))$ and $G_2 = (V(G_2), E(G_2))$ is denoted by $G_1 \cup G_2$ consists of a vertex set $V(G_1 \cup G_2) = V(G_1) \cup V(G_2)$ and an edge set $E(G_1 \cup G_2) = E(G_1) \cup E(G_2)$. If $G$ can be expressed as $G = G_1 \cup G_2$ then $Q_G(\lambda) = Q_{G_1}(\lambda) Q_{G_2}(\lambda)$. 
		
		A cycle in a graph is a finite sequence of distinct vertices $\delta = (v_1, v_2, \dots v_{|\delta|})$ such that $(v_i, v_{i + 1}) \in E(G)$ for $i = 1, 2, \dots (|\delta| - 1)$ and $(v_\delta, v_1) \in E(G)$. Here $|\delta|$ denotes the length of cycle $\delta$. A spanning subgraph of $G$ whose components are trees or odd unicyclic graph is called a TU subgraph of $G$. Let there be a TU-subgraph $H$ of $G$ containing $c$ unicyclic graphs, as well as trees $T_1, T_2, \dots T_s$. The weight $W(H)$ of $H$ is defined by \cite{guo2017coefficients, cvetkovic2007signless},
		\begin{equation}\label{TU_weight}
		W(H) = 4^c\prod_{i = 1}^s(1 + |E(T_i)|),
		\end{equation}
		where $|E(T_i)|$ is the number of edges in the tree $T_i$. Let $H_j$ be TU subgraphs containing $j$ edges for $j = 1, 2, \dots m$. It is proved that, if $m \ge n$, then, $p_0 = 1$ and,
		\begin{equation} \label{char_coeff}
		p_j = \sum_{H_j}(-1)^j W(H_j), ~\text{for}~ j = 1, 2, \dots n,
		\end{equation}
		where the summation runs over all TU graphs $H_j$. But the above equation may hold for $m < n$. Consider the following example.
		\begin{example} \label{critical_graphs}
			For the following graphs $K$ and $K^\tau$ we have,
			\begin{equation}
			\begin{split}
			Q_K(\lambda) & = \det(Q(K) - \lambda I) = \lambda^4 - 6\lambda^3 + 9\lambda^2 - 4\lambda \\
			\text{and}, Q_{K^\tau}(\lambda) & = \det(Q(K^\tau) - \lambda I) = \lambda^4 - 6\lambda^3 + 9\lambda^2 - 4\lambda. 
			\end{split}
			\end{equation}
			$$K = \xymatrix{\bullet_{11} \ar@{-}[d] & \bullet_{12} \ar@{-}[dl] \\ \bullet_{21} \ar@{-}[r] & \bullet_{22}} \hspace{2cm} K^\tau = \xymatrix{\bullet_{11} \ar@{-}[d] \ar@{-}[dr] & \bullet_{12} \\ \bullet_{21} \ar@{-}[r] & \bullet_{22}} $$
			Note that, both $K$ and $K^\tau$ have three TU subgraphs with one edges, and three TU subgraphs of two edges. Therefore $p_1 = -6$ and $p_2 = 9$ for both the cases. The TU subgraph with three edges consists is a tree in $K$. Therefore, for $Q_K(\lambda), p_3 = -4^0(1 + 3) = -4$. The TU subgraph with three edges in $K^\tau$ is an odd cycle, such that, for $Q_{K^\tau}(\lambda), p_3 = -4^1(1 + 0) = -4$. Therefore, $K$ and $K^\tau$ are non-isomorphic $Q$-cospectral graphs, such that, $n > m$. This example also shows that two non-isomorphic TU subgraphs may have equal weights.
		\end{example}

	\section{Properties of partial transpose}
	
		Graph theoretic partial transpose was first defined in \cite{wu2006conditions}. In general we consider a graph with $n = p \times q$ vertices. The vertex set is partitioned into $p$ clusters each containing $q$ vertices. In this work, we consider a special case of partial transpose. Let $G$ has even number of vertices, that is $n = 2 \times q$. We can partition the vertex set into clusters as
		\begin{equation} \label{clustering}
		\begin{split} 
		& V(G) = C_1 \cup C_2, ~\text{such that}~ C_1 \cap C_2 = \emptyset, \\
		\text{and}~ & C_1 = \{v_{1, 1}, v_{1, 2}, \dots v_{1, q}\},\\
		& C_2 = \{v_{2, 1}, v_{2, 2}, \dots v_{2, q}\}.
		\end{split} 
		\end{equation}
		The induced subgraph of $G$ generated by the vertex subset $C_1$ and $C_2$ are denoted by $\langle C_1 \rangle_G$ and $\langle C_2 \rangle_G$, respectively. The spanning subgraph of $G$ with edges $\{(u, v): u \in C_1, v \in C_2\}$ is denoted by $\langle C_1 , C_2 \rangle_G$. If there is no confusion with the graph $G$, for simplicity, we drop the suffixes and denote those subgraphs as $\langle C_1 \rangle$, $\langle C_2 \rangle$, and $\langle C_1, C_2 \rangle$, respectively.
		
		\begin{definition}
			The partial transpose of a clustered graph $G$ is denoted by $G^\tau$ obtained by removing all existing edges $(v_{1, i}, v_{2, j})$ from $G$ and adding the corresponding non-existing edges $(v_{1, j}, v_{2, i})$ to $G$, for all $i \neq j$.
		\end{definition}
		
		For instance, consider the graphs $K$ and $K^\tau$ depicted in example 1. We replace the existing edge $(v_{1, 2}, v_{2, 1})$ with $(v_{1, 1}, v_{2, 2})$ to obtain the partial transpose $K^\tau$ of $K$. 	Note that, partial transpose is labelling dependent. Therefore, one graph may produce different graphs after partial transpose. For example the following figure, the graph $G_0$ remains invarient under partial transpose. But, its isomorphic copy $G$ produces a non-isomorphic graph $G^\tau$.
		
		\begin{tikzpicture}
			\node at (0, 1.5) {};
			\node at (0, -.5) {};
			\node at (-.5, .5) {$G_0 = $};
			\draw[fill] (0, 0) circle [radius= 1.5pt];
			\node[below right] at (0, 0) {$21$};
			\draw[fill] (1, 0) circle [radius= 1.5pt];
			\node[below right] at (1, 0) {$22$};
			\draw[fill] (2, 0) circle [radius= 1.5pt];
			\node[below right] at (2, 0) {$23$};
			\draw[fill] (0, 1) circle [radius= 1.5pt];
			\node[below right] at (0, 1) {$11$};
			\draw[fill] (1, 1) circle [radius= 1.5pt];
			\node[below right] at (1, 1) {$12$};
			\draw[fill] (2, 1) circle [radius= 1.5pt];
			\node[below right] at (2, 1) {$13$};
			\draw (2, 0) -- (0, 0) -- (0, 1) -- (1, 1);
			
			\node at (3.5, .5) {$G = $};
			\draw[fill] (4, 0) circle [radius= 1.5pt];
			\node[below right] at (4, 0) {$21$};
			\draw[fill] (5, 0) circle [radius= 1.5pt];
			\node[below right] at (5, 0) {$22$};
			\draw[fill] (6, 0) circle [radius= 1.5pt];
			\node[below right] at (6, 0) {$23$};
			\draw[fill] (4, 1) circle [radius= 1.5pt];
			\node[below right] at (4, 1) {$11$};
			\draw[fill] (5, 1) circle [radius= 1.5pt];
			\node[below right] at (5, 1) {$12$};
			\draw[fill] (6, 1) circle [radius= 1.5pt];
			\node[below right] at (6, 1) {$13$};
			\draw (6, 1) -- (5, 1) -- (4, 0) -- (6, 0);
			
			\node at (7.5, .5) {$G^\tau = $};
			\draw[fill] (8, 0) circle [radius= 1.5pt];
			\node[below right] at (8, 0) {$21$};
			\draw[fill] (9, 0) circle [radius= 1.5pt];
			\node[below right] at (9, 0) {$22$};
			\draw[fill] (10, 0) circle [radius= 1.5pt];
			\node[below right] at (10, 0) {$23$};
			\draw[fill] (8, 1) circle [radius= 1.5pt];
			\node[below right] at (8, 1) {$11$};
			\draw[fill] (9, 1) circle [radius= 1.5pt];
			\node[below right] at (9, 1) {$12$};
			\draw[fill] (10, 1) circle [radius= 1.5pt];
			\node[below right] at (10, 1) {$13$};
			\draw (10, 1) -- (9, 1);
			\draw (8, 1) -- (9, 0) --(10, 0);
			\draw (8, 0) -- (9, 0);
		\end{tikzpicture}
		
		The arrangement of vertices into clusters, and total number of vertices remains unchanged after partial transpose. It keeps $\langle C_1 \rangle$ and $\langle C_2 \rangle$ unaltered. If degree of a vertex $v_{\mu, i}$ in the graph $G$ be $d(v_{\mu, i})|_G$, then $\sum_{i = 1}^q d(v_{\mu, i})|_G = \sum_{i = 1}^q d(v_{\mu, i})|_{G^\tau}$ for all $\mu$. Changes in the graph is limited within the partially asymmetric edge set, $\mathcal{A} = \{(v_{1, i}, v_{2, j}) \in E(G): i \neq j ~\text{and}~ (v_{1, j}, v_{2, i}) \notin E(G)\} \subset E(\langle C_1, C_2 \rangle)$. 
		
		In this article, we call two isomorphic graphs $G$ and $H$ are equal if the identity mapping acts as the graph isomorphism and we denote $G = H$. A graph $G$ is called partially symmetric if $G = G^\tau$. Clearly for a partially symmetric graph $\mathcal{A} = \emptyset$. A number of partial symmetric graphs are depicted below:
		$$\xymatrix{\bullet_{11} & \bullet_{12} \ar@{-}[d] \\ \bullet_{21} & \bullet_{22}} \hspace{1cm} \xymatrix{\bullet_{11} \ar@{-}[r] & \bullet_{12} \\ \bullet_{21} & \bullet_{22}} \hspace{1cm} \xymatrix{\bullet_{11} \ar@{-}[dr] & \bullet_{12} \ar@{-}[dl] \\ \bullet_{21} & \bullet_{22}} \hspace{1cm} \xymatrix{\bullet_{11} \ar@{-}[d] \ar@{-}[dr] & \bullet_{12} \ar@{-}[dl] \\ \bullet_{21} \ar@{-}[r] & \bullet_{22}}$$
		
		The partial symmetry is different from the usual idea of the symmetry in graph. For instance, the following graph is asymmetric, but, it is partially symmetric with respect to some vertex labelling.
		
		\begin{tikzpicture}
			\node at (0, 1.5) {};
			\node at (0, -.5) {};
			
			\draw[fill] (0, 1) circle [radius= 1.5pt];
			\draw[fill] (1, 1) circle [radius= 1.5pt];
			\draw[fill] (2, 1) circle [radius= 1.5pt];
			\draw[fill] (3, 1) circle [radius= 1.5pt];
			\draw[fill] (4, 1) circle [radius= 1.5pt];
			\draw[fill] (2, 0) circle [radius= 1.5pt];
			\draw (0, 1) -- (4, 1);
			\draw (1, 1) -- (2, 0) -- (2, 1);
			
			\node at (5.5, .5) {$\equiv$};
			
			\draw[fill] (7, 1) circle [radius= 1.5pt];
			\draw[fill] (8, 1) circle [radius= 1.5pt];
			\draw[fill] (9, 1) circle [radius= 1.5pt];
			\draw[fill] (7, 0) circle [radius= 1.5pt];
			\draw[fill] (8, 0) circle [radius= 1.5pt];
			\draw[fill] (9, 0) circle [radius= 1.5pt];
			\draw (7, 1) -- (9, 1) -- (8, 0) -- (7, 0);
			\draw (8, 1) -- (9, 0) -- (9, 1);
		\end{tikzpicture} 
		
		In this following lemma, we mention number of all possible combinations of edges which forms partially symmetric graphs. Recall that, two non-isomorphic graphs may have isomorphic subgraphs. Therefore, given two of these edge combinations may individually generate isomorphic graphs, but they may act as subgraphs of two non-isomorphic graphs. This lemma will help us in calculating number of non-isomorphic graphs with a partially symmetric subgraph.
		\begin{lemma}\label{partially_symmetric_count}
			For any even integer $2q$ there are $2^{\frac{q}{2}(3q - 1)}$ combinations of edges which construct partially symmetric graphs having $2$ clusters with $q$ vertices in each.
		\end{lemma}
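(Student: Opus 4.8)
The plan is to set up a direct counting argument over the three disjoint families of possible edges on the vertex set $C_1 \cup C_2$ and to identify exactly which slots are constrained by the requirement $G = G^\tau$. First I would split the $\binom{2q}{2}$ potential edges into three groups: the $\binom{q}{2}$ potential edges inside $\langle C_1 \rangle$, the $\binom{q}{2}$ potential edges inside $\langle C_2 \rangle$, and the $q^2$ potential edges of $\langle C_1, C_2 \rangle$, the last being in bijection with ordered pairs $(i,j)$ via $(v_{1,i}, v_{2,j})$. By the definition of partial transpose, edges inside a cluster are never altered (partial transpose keeps $\langle C_1 \rangle$ and $\langle C_2 \rangle$ unchanged), so each of the $2\binom{q}{2}$ intra-cluster slots may be filled freely, contributing a factor $2^{2\binom{q}{2}}$.

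Next I would treat the cross edges. Partial transpose acts only on pairs $(v_{1,i}, v_{2,j})$ with $i \neq j$, so the $q$ ``diagonal'' cross edges $(v_{1,i}, v_{2,i})$ are fixed and contribute a free factor $2^{q}$. The remaining $q^2 - q$ off-diagonal cross slots are partitioned by the transpose map into $\binom{q}{2}$ two-element orbits $\{(v_{1,i}, v_{2,j}),\,(v_{1,j}, v_{2,i})\}$ with $i < j$; the condition $G = G^\tau$ is precisely that each such orbit is either entirely filled or entirely empty, which gives $2^{\binom{q}{2}}$ choices. Since the choices in the three groups are independent, multiplying yields $2^{2\binom{q}{2} + q + \binom{q}{2}} = 2^{3\binom{q}{2} + q}$, and because $3\binom{q}{2} + q = \tfrac{3q(q-1)}{2} + q = \tfrac{q(3q-1)}{2}$, the total equals $2^{\frac{q}{2}(3q-1)}$, as claimed.

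There is no serious obstacle here; the argument is pure bookkeeping. The one point that wants a little care is describing correctly the action of the transpose map on the off-diagonal cross slots — namely that it is a fixed-point-free involution on those $q^2 - q$ slots, so that it has exactly $\binom{q}{2}$ orbits, each of size $2$. As a sanity check I would also note that $\tfrac{q(3q-1)}{2}$ is always an integer (one of $q$ and $3q-1$ is even), so the stated power of $2$ is meaningful for every even $2q$; testing $q=2$ recovers the four combinations illustrated just before the lemma statement, which provides a concrete confirmation.
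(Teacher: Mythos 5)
Your proposal is correct and follows essentially the same route as the paper: both partition the potential edges into the two intra-cluster sets, the diagonal cross edges $(v_{1,i},v_{2,i})$, and the off-diagonal cross edges grouped into mirror pairs $\{(v_{1,i},v_{2,j}),(v_{1,j},v_{2,i})\}$, then multiply $2^{\binom{q}{2}}\cdot 2^{\binom{q}{2}}\cdot 2^{q}\cdot 2^{\binom{q}{2}}$. Your explicit remark that the transpose acts as a fixed-point-free involution on the off-diagonal slots is a slightly cleaner justification of the $\binom{q}{2}$ orbit count than the paper gives, but the argument is the same.
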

		
		\begin{proof}
			We classify the edges of a partially symmetric graph $G$ into the following partitions: $E(\langle C_1 \rangle), E(\langle C_2 \rangle)$, $A = \{(v_{1i}, v_{2i}): i = 1, 2, \dots q\}$, and $B = \{(v_{1i}, v_{2j}), (v_{1j}, v_{2i}): i = 1, 2, \dots q ~\text{and}~ i \neq j\}$. Note that all these edge sets remain invariant under partial transpose.
			
			As $C_1$ has $q$ nodes, total number of possible edges in $\langle C_1 \rangle$ is $^qC_2 = \frac{q(q - 1)}{2}$. The number of all possible combinations of edges in $E(\langle C_1 \rangle)$ is $2^{\frac{q(q - 1)}{2}}$. Similarly, $E(\langle C_1 \rangle)$ also has $2^{\frac{q(q - 1)}{2}}$ combinations of edges.
			
			Note that, number of all possible edges in class $A$ is $q$. In a partially symmetric edge combination any of them may be selected on not. Therefore, possible combinations of edges in class $A$ is $2^q$.
			
			Edges in class $B$ appears in a pair $(v_{1i}, v_{2j}), (v_{1j}, v_{2i})$. Two vertices with suffixes $i$ and $j$ from $q$ vertices can be selected in $^qC_2 = \frac{q(q - 1)}{2}$ ways. Total number of possible combinations of edges in class $B$ is $2^{\frac{q(q - 1)}{2}}$.
			
			Therefore, all possible combinations of edges which forms a partially symmetric graph is $2^{\frac{q(q - 1)}{2}}2^{\frac{q(q - 1)}{2}}2^q2^{\frac{q(q - 1)}{2}} = 2^{\frac{q}{2}(3q - 1)}$.
		\end{proof}
		
		We end up this section with the following example which will clarify the difference between partial transpose and Godsil-McKay switching \cite{godsil1982constructing, van2003graphs}. 
		
		\begin{example}
			Consider the graph $G$ with $8$ vertices. To perform Godsil-McKay switching we arrange the vertex set into two clusters $C$ and $D$. Vertices in $D$ is either connected to all the vertices, or half of the vertices, or no vertex of $C$ \cite{godsil1982constructing}. The resulting graph $G^{GM}$ and $G$ are depicted below:
			
			\begin{tikzpicture}
				\node at (0, 1.5) {};
				\node at (0, -.5) {};
				\node at (-1, .5) {$G = $};
				\draw[fill] (0, 0) circle [radius = 1.5pt];
				\draw[fill] (1, 0) circle [radius = 1.5pt];
				\draw[fill] (2, 0) circle [radius = 1.5pt];
				\draw[fill] (3, 0) circle [radius = 1.5pt];
				\draw[fill] (0, 1) circle [radius = 1.5pt];
				\draw[fill] (1, 1) circle [radius = 1.5pt];
				\draw[fill] (2, 1) circle [radius = 1.5pt];
				\draw[fill] (3, 1) circle [radius = 1.5pt];
				\draw (0, 0) -- (1, 0) -- (2, 0) -- (3, 0);
				\draw (1, 1) -- (2, 1);
				\draw (0, 1) -- (1, 0) -- (3, 1);
				\draw[dashed] (1.5, 0) ellipse (1.75cm and .2cm);
				\node at (2.5, -.35) {$D$};
				\draw[dashed] (1.5, 1) ellipse (1.75cm and .2cm);
				\node at (2.5, 1.35) {$C$};
				
				\node at (4, .5) {$G^{GM} = $};
				\draw[fill] (5, 0) circle [radius = 1.5pt];
				\draw[fill] (6, 0) circle [radius = 1.5pt];
				\draw[fill] (7, 0) circle [radius = 1.5pt];
				\draw[fill] (8, 0) circle [radius = 1.5pt];
				\draw[fill] (5, 1) circle [radius = 1.5pt];
				\draw[fill] (6, 1) circle [radius = 1.5pt];
				\draw[fill] (7, 1) circle [radius = 1.5pt];
				\draw[fill] (8, 1) circle [radius = 1.5pt];
				\draw (5, 0) -- (6, 0) -- (7, 0) -- (8, 0);
				\draw (6, 1) -- (7, 1);
				\draw (6, 1) -- (6, 0) -- (7, 1);
				\draw[dashed] (6.5, 0) ellipse (1.75cm and .2cm);
				\node at (7.5, -.35) {$D$};
				\draw[dashed] (6.5, 1) ellipse (1.75cm and .2cm);
				\node at (7.5, 1.35) {$C$};				
			\end{tikzpicture}
			
			Now we perform partial transpose on $G$ taking $C_1 = D$ and $C_2 = C$. 
			
			\begin{tikzpicture}
				\node at (0, 1.5) {};
				\node at (0, -.5) {};
				\node at (-1, .5) {$G = $};
				\draw[fill] (0, 0) circle [radius = 1.5pt];
				\draw[fill] (1, 0) circle [radius = 1.5pt];
				\draw[fill] (2, 0) circle [radius = 1.5pt];
				\draw[fill] (3, 0) circle [radius = 1.5pt];
				\draw[fill] (0, 1) circle [radius = 1.5pt];
				\draw[fill] (1, 1) circle [radius = 1.5pt];
				\draw[fill] (2, 1) circle [radius = 1.5pt];
				\draw[fill] (3, 1) circle [radius = 1.5pt];
				\draw (0, 0) -- (1, 0) -- (2, 0) -- (3, 0);
				\draw (1, 1) -- (2, 1);
				\draw (0, 1) -- (1, 0) -- (3, 1);
				\draw[dashed] (1.5, 0) ellipse (1.75cm and .2cm);
				\node at (2.5, -.35) {$C_1$};
				\draw[dashed] (1.5, 1) ellipse (1.75cm and .2cm);
				\node at (2.5, 1.35) {$C_2$};
				
				\node at (4, .5) {$G^\tau = $};
				\draw[fill] (5, 0) circle [radius = 1.5pt];
				\draw[fill] (6, 0) circle [radius = 1.5pt];
				\draw[fill] (7, 0) circle [radius = 1.5pt];
				\draw[fill] (8, 0) circle [radius = 1.5pt];
				\draw[fill] (5, 1) circle [radius = 1.5pt];
				\draw[fill] (6, 1) circle [radius = 1.5pt];
				\draw[fill] (7, 1) circle [radius = 1.5pt];
				\draw[fill] (8, 1) circle [radius = 1.5pt];
				\draw (5, 0) -- (6, 0) -- (7, 0) -- (8, 0);
				\draw (6, 1) -- (7, 1);
				\draw (5, 0) -- (6, 1) -- (8, 0);
				\draw[dashed] (6.5, 0) ellipse (1.75cm and .2cm);
				\node at (7.5, -.35) {$C_1$};
				\draw[dashed] (6.5, 1) ellipse (1.75cm and .2cm);
				\node at (7.5, 1.35) {$C_2$};		
			\end{tikzpicture}
			
			Clearly, $G^\tau$ is non-isomorphic to $G^{GM}$.
		\end{example}

	\section{Number of non-isomorphic graphs which are $Q$-cospectral to their partial transpose}
				
		In the example \ref{critical_graphs}, we have seen that $K$ and $K^\tau$ are $Q$-cospectral. Also, we have mentioned that $K^\tau$ is the partial transpose of $K$. In fact, $K$ is the smallest graph which is non-isomorphic and $Q$-cospectral to its partial transpose. But, not all graphs are $Q$-cospectral to their partial transpose, for instance, consider the following graphs:
		$$G = \xymatrix{\bullet_{1,1} \ar@{-}[d] \ar@{-}[dr] & \bullet_{1,2} \ar@{-}[d] & \bullet_{1,3} \ar@{-}[d] \ar@{-}[dl] \\ \bullet_{2,1} \ar@{-}[r]& \bullet_{2,2} & \bullet_{2,3} \ar@{-}[l]} \hspace{2cm} G^\tau = \xymatrix{\bullet_{1,1} \ar@{-}[d] & \bullet_{1,2} \ar@{-}[d] \ar@{-}[dr] \ar@{-}[dl] & \bullet_{1,3} \ar@{-}[d] \\ \bullet_{2,1} \ar@{-}[r] & \bullet_{2,2} & \bullet_{2,3} \ar@{-}[l]}$$
		It is easy to calculate that $Q$-spectra of $G$ and $G^\tau$ are $\{0.6277, 1, 1, 2, 3, 6.3723\}$ and $\{0.3542, 0.5858, 2, 2, 3.4142, 5.6458\}$. Also, there is no vertex labelling, such that, any of the following two $Q$-cospectral graphs are partial transpose of another: 
		$$\xymatrix{\bullet_1 \ar@{-}[d] \ar@{-}[r] \ar@{-}[dr] \ar@{-}[drr] & \bullet_2 \ar@{-}[d] \ar@{-}[dr] \ar@{-}[dl] & \bullet_3 \\ \bullet_4 & \bullet_5 & \bullet_6} \hspace{2 cm} \xymatrix{\bullet_1 \ar@{-}[d] \ar@{-}[r] \ar@{-}[dr] \ar@{-}[drr] & \bullet_2 \ar@{-}[d] \ar@{-}[dl] & \bullet_3 \\ \bullet_4 \ar@{-}[r] & \bullet_5 & \bullet_6}$$
		We can check this assertion by considering every vertex labellings on the above graphs using a suitable computer algebra system. 
		
		There are big families of graphs which are $Q$-cospectral to their partial transpose. The following table provides number of graphs which are non-isomorphic, and $Q$-cospectral to their partial transpose. We use Networkx library \cite{schult2008exploring} for generating the following computational data and all examples which are included in this article.
		\newpage
		\begin{longtable}{|p{.15 \textwidth}|p{.1 \textwidth}| p{.2 \textwidth}| p{.26 \textwidth}| p{.1 \textwidth}|}
%		\begin{longtable}{|c|c| c| c| c|}
			\hline
			Number of vertices & Number of edges & Number of non-isomorphic $Q$-cospectrals & Number of non-isomorphic $Q$-cospectrals to PT & Ratio\\
			\hline
			4 & 3 & 2 & 2 & 1\\
			\hline 
			5 & 3 & 2 & 2 & 1 \\
			\cline{2 - 5}
			& 7 & 2 & 0 & 0 \\
			\hline 
			6 & 3 & 2 & 2 & 1 \\
			\cline{2 - 5}
			& 4 & 2 & 2 & 1 \\
			\cline{2 - 5}
			& 7 & 4 & 2 & .5 \\
			\hline
			7 & 3 & 2 & 2 & 1\\
			\cline{2 - 5}
			& 4 & 2 & 2 & 1 \\
			\cline{2 - 5}
			& 5 & 2 & 2 & 1 \\
			\cline{2 - 5}
			& 6 & 2 & 0 & 0 \\
			\cline{2 - 5}
			& 7 & 6 & 4 & $.667$ \\
			\cline{2 - 5}
			& 8 & 12 & 8 & $.667$ \\
			\cline{2 - 5}
			& 9 & 14 & 10 & $.714$ \\
			\cline{2 - 5}
			& 10 & 14 & 10 & $.714$ \\
			\cline{2 - 5}
			& 11 & 14 & 12 & $.857$ \\
			\cline{2 - 5}
			& 12 & 12 & 12 & 1 \\
			\cline{2 - 5}
			& 13 & 12 & 10 & $.833$ \\
			\cline{2 - 5}
			& 14 & 6 & 2 & $.333$ \\
			\cline{2 - 5}
			& 15 & 2 & 0 & 0 \\
			\cline{2 - 5}
			& 16 & 2 & 0 & 0 \\
			\cline{2 - 5}
			& 17 & 2 & 0 & 0 \\
			\hline 
			8 & 3 & 2 & 2 & 1 \\
			\cline{2 - 5}
			& 4 & 2 & 2 & 1 \\
			\cline{2 - 5}
			& 5 & 4 & 4 & 0 \\
			\cline{2 - 5}
			& 6 & 12 & 8 & $.667$ \\
			\cline{2 - 5}
			& 7 & 20 & 14 & $.7$  \\
			\cline{2 - 5}
			& 8 & 38 & 26 & .684\\
			\cline{2 - 5}
			& 9 & 58 & 42 & .724\\
			\hline
		\end{longtable}	
		This statistics suggests that $75\%$ among the non-isomorphic $Q$-cospectral graphs with $6$ vertices can be determined by partial transpose. For $7$ vertex graphs this ration is $71.15\%$. For graphs with $8$ vertices the ratio is $71.01\%$ which is computed up to our limitation. Therefore, a large class of graphs are non-isomorphic and $Q$-cospectral to their partial transpose. These graphs follows a number of patterns which we shall discuss in the following section.

	\section{When $G$ and $G^\tau$ are non-isomorphic and $Q$-cospectral?}
		
		Two $Q$-cospectral graphs $G$ and $G^\tau$ have equal $Q$-polynomials, that is, the coefficients of $Q_G(\lambda)$, and $Q_{G^\tau}(\lambda)$ are equal. Recall that, the coefficients of $Q_G(\lambda)$ depend on TU subgraphs of $G$. Let $\mathcal{U}_j(G)$ be the set of all TU subgraphs of $j$ edges. Two sets of TU subgraphs $\mathcal{U}_j(G)$ and $\mathcal{U}_j(G^\tau)$ are comparable if 
		\begin{equation}
			\sum_{H \in \mathcal{U}_j(G)} W(H) = \sum_{H \in \mathcal{U}_j(G^\tau)} W(H),
		\end{equation}
		where $W(H)$ are determined by the equation (\ref{TU_weight}). Now equation (\ref{char_coeff}) suggests that, if $G$ and $G^\tau$ are $Q$-cospectral the sets of their TU subgraphs are comparable for all $j = 1, 2, \dots m$. We call two graphs $G$ and $H$ are comparable if $\mathcal{U}_j(G)$ and $\mathcal{U}_j(H)$ are comparable for all $j$. As an example two tree with equal number of edges are comparable. Similarly, two circles of equal lengths are comparable. In example \ref{critical_graphs} we have already seen that the TU subgraphs of $G$ and $G^\tau$ have equal weights but they are not isomorphic. Here, we find conditions on graphs which keep $\mathcal{U}_j(G)$ and $\mathcal{U}_j(G^\tau)$ comparable for all $j$. 
		
	\begin{theorem}\label{theorem1}
		Let the subgraphs $\langle C_1 \rangle_{G_0}$ and $\langle C_2 \rangle_{G_0}$ of the graph $G_0$ be two $q$-cycles as well as $\langle C_1, C_2\rangle_{G_0}$ be an empty graph. Also, let $v_{1, i}$ and $v_{1, j}$ be two non-adjacent vertices of $G_0$. We add the edges $(v_{1, i}, v_{1, j}), (v_{1, i}, v_{2, i})$ and $(v_{1, i}, v_{2, j})$ with $G_0$. The new graph $G$ is non-isomorphic and Q-cospectral to its partial transpose.
	\end{theorem}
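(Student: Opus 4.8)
The plan is to first make $G$ and $G^{\tau}$ explicit. Under the hypotheses $G_0$ is the disjoint union of two $q$-cycles, one spanning $C_1$ and one spanning $C_2$, and $G$ is obtained from $G_0$ by adjoining the star $K_{1,3}$ with centre $v_{1,i}$ and leaves $v_{1,j},v_{2,i},v_{2,j}$, i.e. the edges $a=(v_{1,i},v_{1,j})$, $b=(v_{1,i},v_{2,i})$ and $c=(v_{1,i},v_{2,j})$. Since $\langle C_1,C_2\rangle_{G_0}$ is empty and $b$ joins two vertices carrying the same index, $c$ is the \emph{only} partially asymmetric edge of $G$, so $G^{\tau}=G_0+a+b+c'$ with $c'=(v_{1,j},v_{2,i})$; note that in $G^{\tau}$ the three adjoined edges $a,b,c'$ form a triangle on $\{v_{1,i},v_{1,j},v_{2,i}\}$ and $v_{2,j}$ receives no new edge. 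Write $G_1=G_0+a+b$, the common subgraph $G-c=G^{\tau}-c'$.

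Non-isomorphism will be read off from the degrees: every vertex of $G_0$ has degree $2$; in $G$ the vertex $v_{1,i}$ acquires three genuinely new neighbours, so $\deg_G(v_{1,i})=5$, whereas in $G^{\tau}$ the triangle raises only $\deg(v_{1,i}),\deg(v_{1,j}),\deg(v_{2,i})$ by $2$ each, so $\Delta(G^{\tau})=4$. The degree sequences differ, hence $G\not\cong G^{\tau}$.

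For $Q$-cospectrality note that $m=2q+3>2q=n$, so by \eqref{char_coeff} every coefficient of $Q_G$ and of $Q_{G^{\tau}}$ is governed by TU-subgraph weights, and it suffices to show $G$ and $G^{\tau}$ are comparable, i.e. $\sum_{H\in\mathcal{U}_k(G)}W(H)=\sum_{H\in\mathcal{U}_k(G^{\tau})}W(H)$ for every $k$. A TU subgraph of $G$ that misses $c$ is precisely a TU subgraph of $G_1$, and similarly for $G^{\tau}$ and $c'$, so those terms cancel and the task reduces to
\[
\sum_{\substack{H\in\mathcal{U}_k(G)\\ c\in H}}W(H)\;=\;\sum_{\substack{H'\in\mathcal{U}_k(G^{\tau})\\ c'\in H'}}W(H').
\]
I would establish this either by a weight-preserving bijection or by evaluating both sides in closed form. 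Each relevant $H$ is encoded by the triple $\bigl(F,\,H\cap\{a,b\},\,c\bigr)$ with $F=H\cap E(G_0)$ a spanning subgraph of the two $q$-cycles — each cycle either kept whole (an odd unicyclic component, weight factor $4$, possible only for odd $q$) or broken into a linear forest — and the analogous triple with $c'$ encodes $H'$. One then organises the matching by the parity of $q$, by whether $v_{2,i}\sim v_{2,j}$ on the second cycle, and by how the four special vertices $v_{1,i},v_{1,j},v_{2,i},v_{2,j}$ split among the components of $H$, checking in each class that the star attachment in $G$ and the triangle attachment in $G^{\tau}$ produce the same multiset of tree edge-counts and of odd cycles, so that the products $4^{c}\prod_i(1+|E(T_i)|)$ of \eqref{TU_weight} agree class by class.

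The crux is that last verification, since the naive map ``keep $F$ and $H\cap\{a,b\}$, replace $c$ by $c'$'' is \emph{not} weight-preserving: if $a,b,c\in H$ and the component through $v_{1,i}$ is a tree with $3+\ell$ edges (weight factor $4+\ell$), then swapping $c$ for $c'$ closes the triangle $a,b,c'$, turning that component into an odd unicyclic one of weight factor $4$ and possibly detaching $v_{2,j}$ as an isolated vertex, so the weights match only when $\ell=0$. The correct bijection must therefore also toggle the presence of $a$ and of $b$, and occasionally repair a few cycle edges near positions $i$ and $j$, so as to trade ``one more tree edge'' against ``closing an odd cycle'' in exactly the right proportion, while keeping every newly created cycle odd on both sides. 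Carrying this bookkeeping through uniformly across all the cases of $q$-parity, adjacency of $v_{2,i},v_{2,j}$, and component partition of the four special vertices is the main obstacle; once done, summing over the cases gives comparability and hence $Q_G=Q_{G^{\tau}}$.
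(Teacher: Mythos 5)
Your non-isomorphism argument is fine and in fact sharper than what the paper offers: the degree sequence of $G$ is $(5,3,3,3,2,\dots,2)$ while that of $G^{\tau}$ is $(4,4,4,2,\dots,2)$, which settles the matter immediately. Your setup for cospectrality is also sound --- identifying $c=(v_{1,i},v_{2,j})$ as the unique partially asymmetric edge, cancelling the TU subgraphs avoiding $c$ against those avoiding $c'$, and observing that the naive substitution $c\mapsto c'$ is \emph{not} weight-preserving (a tree component with $3+\ell$ edges through $v_{1,i}$ has weight factor $4+\ell$, while closing the triangle $a,b,c'$ collapses the factor to $4$ no matter how large the attached trees are). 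That observation is correct and is precisely the difficulty.

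The problem is that you stop there. You state that ``the correct bijection must therefore also toggle the presence of $a$ and of $b$, and occasionally repair a few cycle edges near positions $i$ and $j$,'' and you call carrying this out ``the main obstacle'' --- but that obstacle \emph{is} the theorem. Without an explicit weight-preserving correspondence (or a closed-form evaluation of both sums), no part of the cospectrality claim has actually been established; what you have is a correct reduction plus a correct diagnosis of why the obvious map fails. For comparison, the paper organises the correspondence not edge-by-edge but cycle-by-cycle: the only cycles of $G$ that change under partial transpose are $\delta_5=(v_{1,i},v_{2,i},v_{2,i+1},\dots,v_{2,j})$ and $\delta_6=(v_{2,1},\dots,v_{2,i},v_{1,i},v_{2,j},\dots,v_{2,q})$, which are replaced in $G^{\tau}$ by cycles $\delta_5'$, $\delta_6'$ of the same lengths running through $C_1$ instead of $C_2$, together with the local swap $K\mapsto K^{\tau}$ of Example~\ref{critical_graphs}; the equal lengths of the two $q$-cycles $\delta_1,\delta_2$ are what absorb the ``repair near positions $i$ and $j$'' that you anticipate, since a TU subgraph using $\delta_6$ plus edges of $\delta_1$ corresponds to one using $\delta_6'$ plus edges of $\delta_2$. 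Whether one finds the paper's own verification fully rigorous is a separate question, but it at least exhibits the correspondence; your proposal, as written, does not, so the central claim remains unproved.
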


	\begin{proof}
		Clearly, $G = G_0 \cup \{(v_{1, i}, v_{1, j}), (v_{1, i}, v_{2, i}), (v_{1, i}, v_{2, j})\}$. The set of all cycles in $G$ consists of two cycles of $G_0$. Call them $\delta_1$ and $\delta_2$. The following new cycles are generated by additional three edges and their incidence with existing edges in $G_0$:
		\begin{enumerate}
			\item 
				$\delta_3 = (v_{1, i}, v_{1, i+1}, v_{1, i + 2}, \dots v_{1, j})$,
			\item
				$\delta_4 = (v_{1, 1}, v_{1, 2}, \dots v_{1, i}, \dots v_{1, j}, v_{1, j+1}, \dots v_{1, q})$,
			\item
				$\delta_5 = (v_{1, i}, v_{2, i}, v_{2, i+ 1}, v_{2, i + 2}, \dots v_{2, j})$, and
			\item
				$\delta_6 = (v_{2, 1}, v_{2,2}, \dots v_{2, i}, v_{1, i}, v_{2, j}, v_{2, j + 1}, \dots v_{2, q})$.
		\end{enumerate}
		Note that, $\langle C_1, C_2 \rangle_G$ contains only two edges which are $(v_{1, i}, v_{2, i})$ and $(v_{1, i}, v_{2, j})$. Partial transpose replace $(v_{1, i}, v_{2, j})$ with $(v_{1, j}, v_{2, i})$. The cycles $\delta_1, \delta_2, \delta_3$ and $\delta_4$ remain invariant under partial transpose on $G$. Therefore, their TU subgrphs are isomorphic in $G$ and $G^\tau$ and have equal contribution in $Q_G(\lambda)$, and $Q_{G^\tau}(\lambda)$.
		
		Now $\delta_5$ in $G$ is replaced by $\delta_5' = (v_{1, i}, v_{2, i}, v_{1, j}, v_{1, j-1}, \dots  v_{1, i+1})$ in $G^\tau$. They have equal length and equal contribution in the characteristic coefficients. The circle $\delta_6$ in $G$ and its counterpart $\delta_6' = (v_{1, 1}, v_{1, 2}, \dots v_{1, i}, v_{2,i}, v_{1, j}, v_{1, j+1}, \dots v_{1, q})$ in $G^\tau$ have equal lengths, $|\delta_6| = |\delta_6'| = q - (j - i) + 2$. If $(v_{2, k}, v_{2, k + 1}) \in \delta_6 \cap c_2$ in $G$ then $(v_{1, k}, v_{1, k + 1}) \in \delta_6' \cap c_1$ in $G^\tau$. A TU subgraph containing more than $|\delta_6|$ edges contains edges from $\delta_1$ in $G$. The role of $\delta_1$ in $G$ is replaced by the edges of $\delta_2$ in $G^\tau$. We have assumed that $\delta_1$ and $\delta_2$ have equal length. Therefore, replacement of $\delta_6$ in $G^\tau$ does not make any difference in the characteristic coefficients.
		
		The new edges $K = \{(v_{1, i}, v_{1, j}), (v_{1, i}, v_{2, i}), (v_{1, i}, v_{2, j})\}$ forms a tree in $G$. It is replaced by an uni-cyclic TU subgraph $K^\tau = (v_{1, i}, v_{1, j}, v_{2, i})$ in $G^\tau$. They have equal contribution in $Q_G(\lambda)$ and $Q_{G^\tau}(\lambda)$ that we have seen in example \ref{critical_graphs}.
		
		Therefore, all the TU subgraphs of $G$ and $G^\tau$ are comparable as well as they form equal characteristic polynomials. Hence, $G$ is $Q$-cospectral to its partial transpose.
	\end{proof}
	
	Note that, if $v_{1,i}$ and $v_{1, j}$ are adjacent in $G_0$ we may construct $G = G_0 \cup \{(v_{1, i}, v_{2, i}), (v_{1, i}, v_{2, j})\}$. We can easily prove that $G$ ad $G^\tau$ are isomorphic and Q-cospectral.
	
	Given any integer $q$ there is only one $q$-cycle which is considered as $\langle C_1 \rangle$, and $\langle C_2 \rangle$. For any vertex $v_{1, i} \in C_1$ there are $(q - 2)$ non-adjacent vertices which are possible choice of $v_{1, j}$. Also, we can choose $v_{1,i}$ in $q$ ways, but it will generate isomorphic families of graphs. We can check it by considering two graphs generated by choosing $v_{1, 1}$ and $v_{1, i}$. Therefore, there are $2^{(q - 2)}$ non-isomorphic graphs which are non-isomorphic and $Q$-cospectral to their partial transposes.
	
	\begin{example}
		Consider the following graph $G$ with its partial transpose $G^\tau$:
		
		\begin{tikzpicture}[scale = 1]
			\node at (-1, .5) {$G = $};
			\draw[fill] (0, 0) circle [radius= 1.5pt];
			\node[below right] at (0, 0) {$21$};
			\draw[fill] (1, 0) circle [radius= 1.5pt];
			\node[below right] at (1, 0) {$22$};
			\draw[fill] (2, 0) circle [radius= 1.5pt];
			\node[below right] at (2, 0) {$23$};
			\draw[fill] (3, 0) circle [radius= 1.5pt];
			\node[below right] at (3, 0) {$24$};
			\draw[fill] (4, 0) circle [radius= 1.5pt];
			\node[below right] at (4, 0) {$25$};
			\draw[fill] (0, 1) circle [radius= 1.5pt];
			\node[below right] at (0, 1) {$11$};
			\draw[fill] (1, 1) circle [radius= 1.5pt];
			\node[below right] at (1, 1) {$12$};
			\draw[fill] (2, 1) circle [radius= 1.5pt];
			\node[below right] at (2, 1) {$13$};
			\draw[fill] (3, 1) circle [radius= 1.5pt];
			\node[below right] at (3, 1) {$14$};
			\draw[fill] (4, 1) circle [radius= 1.5pt];
			\node[below right] at (4, 1) {$15$};
			\draw (0, 0) -- (1, 0) -- (2, 0) -- (3, 0) -- (4, 0);
			\draw (0, 1) -- (1, 1) -- (2, 1) -- (3, 1) -- (4, 1);
			\draw (1, 0) -- (1, 1) -- (3, 0);
			\draw (0,0) .. controls (2, -.5) .. (4, 0);
			\draw (0,1) .. controls (2, 1.5) .. (4, 1);
			\draw (1,1) .. controls (2, 1.25) .. (3, 1);
		\end{tikzpicture}
		\begin{tikzpicture}[scale = 1]
			\node at (-1, .5) {and $G^\tau = $};
			\draw[fill] (0, 0) circle [radius= 1.5pt];
			\node[below right] at (0, 0) {$21$};
			\draw[fill] (1, 0) circle [radius= 1.5pt];
			\node[below right] at (1, 0) {$22$};
			\draw[fill] (2, 0) circle [radius= 1.5pt];
			\node[below right] at (2, 0) {$23$};
			\draw[fill] (3, 0) circle [radius= 1.5pt];
			\node[below right] at (3, 0) {$24$};
			\draw[fill] (4, 0) circle [radius= 1.5pt];
			\node[below right] at (4, 0) {$25$};
			\draw[fill] (0, 1) circle [radius= 1.5pt];
			\node[below right] at (0, 1) {$11$};
			\draw[fill] (1, 1) circle [radius= 1.5pt];
			\node[below right] at (1, 1) {$12$};
			\draw[fill] (2, 1) circle [radius= 1.5pt];
			\node[below right] at (2, 1) {$13$};
			\draw[fill] (3, 1) circle [radius= 1.5pt];
			\node[below right] at (3, 1) {$14$};
			\draw[fill] (4, 1) circle [radius= 1.5pt];
			\node[below right] at (4, 1) {$15$};
			\draw (0, 0) -- (1, 0) -- (2, 0) -- (3, 0) -- (4, 0);
			\draw (0, 1) -- (1, 1) -- (2, 1) -- (3, 1) -- (4, 1);
			\draw (1, 0) -- (1, 1);
			\draw (1, 0) -- (3, 1);
			\draw (0,0) .. controls (2, -.5) .. (4, 0);
			\draw (0,1) .. controls (2, 1.5) .. (4, 1);
			\draw (1,1) .. controls (2, 1.25) .. (3, 1);
		\end{tikzpicture}
		
		Here, $q = 5$. Cycles in $G$ are:
		\begin{enumerate}
			\item 
				$\delta_1 = (v_{11}, v_{12}, \dots v_{15})$ with $|\delta_1| = 5$,
			\item 
				$\delta_2 = (v_{21}, v_{22}, \dots v_{25})$ with $|\delta_2| = 5$,
			\item 
				$\delta_3 = (v_{12}, v_{13}, v_{14})$ with $|\delta_3| = 3$,
			\item
				$\delta_4 = (v_{11}, v_{12}, v_{14}, v_{15})$ with $|\delta_4| = 4$,
			\item
				$\delta_5 = (v_{12}, v_{22}, v_{23}, v_{24})$ with $|\delta_5| = 4$,
			\item
				$\delta_6 = (v_{21}, v_{22}, v_{12}, v_{24}, v_{25})$ with $|\delta_6| = 5$.
		\end{enumerate}
		There are four uni-cyclic graphs which are $\delta_1, \delta_2, \delta_3$, and $\delta_6$. Here, $\delta_1, \delta_2, \delta_3$ remains invariant under partial transpose. Also, $\delta_6$ is replaced by $\delta_6' = (v_{11}, v_{12}, $ $v_{22}, v_{14}, v_{15})$. There is only one change among trees. The subgraph $K$ in $G$ is transformed to the odd unicyclic graph $K^\tau$ in $G^\tau$. Therefore, all TU subgraphs of $G$ and $G^\tau$ are comparable. Therefore, $G$ and $G^\tau$ are cospectral. The subgraphs $K$ and $K^\tau$ make the graphs $G$ and $G^\tau$ non-isomorphic.
	\end{example}

	\begin{corollary}\label{corollary1}
		Let the subgraphs $\langle C_1 \rangle_{G_0}$ and $\langle C_2 \rangle_{G_0}$ of the graph $G_0$ be two $q$-cycles as well as $\langle C_1, C_2\rangle_{G_0}$ be an empty graph. We add the edges $(v_{1, i}, v_{2, i})$ and $(v_{1, i}, v_{2, i + 1})$ as well as remove the edge $(v_{2, i}, v_{2, i+1})$. The new graph $G$ is non-isomorphic and $Q$-cospectral to its partial transpose.
	\end{corollary}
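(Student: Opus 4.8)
The plan is to adapt the proof of Theorem~\ref{theorem1}: describe $G$ and $G^\tau$ explicitly, track the TU subgraphs according to which of the newly inserted edges they contain, and conclude that $\mathcal{U}_j(G)$ and $\mathcal{U}_j(G^\tau)$ are comparable for every $j$. First I would record that $G = (G_0 \setminus \{(v_{2,i},v_{2,i+1})\}) \cup \{(v_{1,i},v_{2,i}),(v_{1,i},v_{2,i+1})\}$, so $\langle C_1\rangle_G$ is still the $q$-cycle $\langle C_1\rangle_{G_0}$, while $\langle C_2\rangle_G$ is the $q$-vertex path obtained from the $q$-cycle $\langle C_2\rangle_{G_0}$ by deleting $(v_{2,i},v_{2,i+1})$, its endpoints being $v_{2,i}$ and $v_{2,i+1}$; and $\langle C_1,C_2\rangle_G = \{(v_{1,i},v_{2,i}),(v_{1,i},v_{2,i+1})\}$. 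Of these two edges the diagonal one $(v_{1,i},v_{2,i})$ is fixed by partial transpose and $(v_{1,i},v_{2,i+1})$ is the unique partially asymmetric edge, so $G^\tau = (G_0 \setminus \{(v_{2,i},v_{2,i+1})\}) \cup \{(v_{1,i},v_{2,i}),(v_{1,i+1},v_{2,i})\}$. In particular $G$ and $G^\tau$ coincide on the common subgraph $G_0' := (G_0 \setminus \{(v_{2,i},v_{2,i+1})\}) \cup \{(v_{1,i},v_{2,i})\}$ and differ only in the single edge $e_2 = (v_{1,i},v_{2,i+1})$ versus $e_2' = (v_{1,i+1},v_{2,i})$.

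Next I would enumerate the cycles. In $G$ they are $\delta_1 = \langle C_1\rangle_{G_0}$ of length $q$ and $\delta_2 = (v_{1,i},v_{2,i},v_{2,i-1},\dots,v_{2,i+1})$ of length $q+1$, which re-closes the $\langle C_2\rangle$-path through $v_{1,i}$ by means of the two new edges. In $G^\tau$ the cycles are $\delta_1$ again, the length-$(q+1)$ cycle $(v_{2,i},v_{1,i},v_{1,i-1},\dots,v_{1,i+1})$ closing through $v_{2,i}$ the long way around $\langle C_1\rangle$, and, in addition, the triangle $\delta_3' = (v_{1,i},v_{1,i+1},v_{2,i})$ formed by $(v_{1,i},v_{2,i})$, $(v_{1,i+1},v_{2,i})$ and the already present cycle edge $(v_{1,i},v_{1,i+1})$. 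So there is a genuine asymmetry: $G^\tau$ carries an extra odd cycle of length $3$ with no counterpart in $G$, precisely because the deletion of $(v_{2,i},v_{2,i+1})$ destroys the edge that would close the symmetric triangle in $G$. A plain cycle-by-cycle matching of the kind used for Theorem~\ref{theorem1} therefore cannot work here, and absorbing this asymmetry is the step I expect to be the main obstacle. The tool is the weight identity already exploited in Example~\ref{critical_graphs}: a triangle, as an odd unicyclic TU component, has weight $4^1 = 4$, equal to the weight $1+3$ of a three-edge tree component such as $P_4$ or the star $K_{1,3}$.

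Concretely, I would split the TU subgraphs of $G$ (resp.\ $G^\tau$) according to which of the two new edges they contain. Those containing neither new edge, and those containing only the diagonal edge $(v_{1,i},v_{2,i})$, form families that are literally identical for $G$ and $G^\tau$ (they live inside the common subgraph $G_0'$), so they contribute equally. Those containing the other new edge but not $(v_{1,i},v_{2,i})$ live inside $G - (v_{1,i},v_{2,i})$, resp.\ $G^\tau - (v_{1,i},v_{2,i})$, and these two graphs are isomorphic --- each is a $q$-cycle with a pendant path attached at a single vertex --- by an isomorphism sending $e_2$ to $e_2'$, so those families again match with equal weights. The remaining family, the TU subgraphs using both new edges, is where $\delta_3'$ occurs; here I would build a weight-preserving bijection onto the corresponding family of $G$ by exchanging, inside each such subgraph of $G^\tau$, the three triangle edges $\{(v_{1,i},v_{1,i+1}),(v_{1,i},v_{2,i}),(v_{1,i+1},v_{2,i})\}$ for the three star edges $\{(v_{1,i},v_{1,i+1}),(v_{1,i},v_{2,i}),(v_{1,i},v_{2,i+1})\}$ of the $K_{1,3}$ centered at $v_{1,i}$ in $G$ (and, in the parallel situation where the long $(q+1)$-cycle through $\langle C_1\rangle$ is created instead of the triangle, using that $\langle C_1\rangle_{G_0}$ and $\langle C_2\rangle_{G_0}$ have the same length $q$, exactly as $\delta_1$ and $\delta_2$ are used in the treatment of $\delta_6$ in Theorem~\ref{theorem1}). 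Since $W(K_{1,3}) = W(C_3) = 4$ and the only other change in the bookkeeping is the attached-versus-isolated status of $v_{2,i+1}$, which always contributes the same factor $1+|E(\cdot)|$, every coefficient $p_j$ of the $Q$-polynomial is unchanged; this last case carries essentially all of the work. Hence $Q_G(\lambda) = Q_{G^\tau}(\lambda)$.

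Finally, non-isomorphism is immediate from the degree sequences: in $G$ the vertex $v_{1,i}$ has degree $4$ and every other vertex has degree $2$, whereas in $G^\tau$ the vertices $v_{1,i},v_{1,i+1},v_{2,i}$ have degree $3$, the vertex $v_{2,i+1}$ has degree $1$, and the remaining vertices have degree $2$. These sequences differ, so $G$ and $G^\tau$ are non-isomorphic, and by the above they are $Q$-cospectral.
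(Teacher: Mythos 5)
Your setup is sound, and in several places more explicit than the paper's own proof (which simply defers to Theorem~\ref{theorem1}): the description of $G^\tau$, the splitting of the TU subgraphs into those avoiding the off-diagonal cross edge (these live in the common graph and match trivially), those containing it but not the diagonal edge $(v_{1,i},v_{2,i})$ (matched via the isomorphism between $G-(v_{1,i},v_{2,i})$ and $G^\tau-(v_{1,i},v_{2,i})$), and the degree-sequence argument for non-isomorphism are all correct. The genuine gap is in your third family. The edge swap $(v_{1,i+1},v_{2,i})\leftrightarrow(v_{1,i},v_{2,i+1})$ is not weight-preserving, because the identity $W(C_3)=W(K_{1,3})=4$ from Example~\ref{critical_graphs} applies only when the triangle, respectively the star, is an \emph{entire component} of the TU subgraph: an odd unicyclic component contributes the factor $4$ no matter how many tree edges hang off it, whereas a tree component contributes $1$ plus its number of edges, so the two sides diverge as soon as anything else attaches. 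Concretely, for $q=3$, $i=1$, the TU subgraph $\{(v_{1,1},v_{1,2}),(v_{1,2},v_{1,3}),(v_{1,1},v_{2,1}),(v_{1,1},v_{2,2})\}$ of $G$ is a tree on five vertices with weight $5$, while its swap image in $G^\tau$ is a triangle with one pendant edge, an odd unicyclic component of weight $4$. Worse, the swap need not even preserve the TU property: $\{(v_{1,2},v_{1,3}),(v_{1,1},v_{1,3}),(v_{1,1},v_{2,1}),(v_{1,1},v_{2,2})\}$ is a tree in $G$, but its image in $G^\tau$ contains the even cycle $v_{1,1}v_{1,3}v_{1,2}v_{2,1}$; similarly, taking both cross edges together with all of the $\langle C_2\rangle$-path closes a $(q+1)$-cycle in $G$ but leaves a tree in $G^\tau$. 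So the claim that the only remaining bookkeeping is the attached-versus-isolated status of $v_{2,i+1}$ is false.

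The corollary itself appears to be correct --- for $q=3$ I checked that all coefficients $p_0,\dots,p_6$ agree --- but in the third family the agreement arises from a cancellation \emph{across} different choices of the retained cycle and path edges, not from the subgraph-by-subgraph exchange you describe. A repair would have to fix the edge count $j$, classify the component through $v_{1,i}$ in $G$ (tree, carrying the $q$-cycle, or carrying the $(q+1)$-cycle) against the component through $v_{2,i}$ in $G^\tau$ (tree, carrying the triangle, the $q$-cycle, or the $(q+1)$-cycle), and show the two weight tallies agree for every $j$. That bookkeeping is the real content of the corollary, and neither your argument nor the paper's sketch supplies it.
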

	
	\begin{proof}
		Verification of $Q$-cospectrality of $G$ and $G^\tau$ is similar to the theorem \ref{theorem1}. Non-existence of the edge $(v_{2, i}, v_{2, i+1})$ and alteration of $(v_{1, i}, v_{2, i + 1})$ during partial transpose makes $G$ non-isomorphic to $G^\tau$.
	\end{proof}
	
	Here if we do not remove $(v_{2, i}, v_{2, i+1})$, then $G$ is isomorphic and $Q$-cospectral to its partial transpose. One can check it by keeping edge $(v_{2,1}, v_{2,2})$ in the example below.
	
	We can select a vertex $v_{1, i}$ from the vertices of $C_1$ in $q$ ways. For every such choice we may construct a graph $G$. We can check that all these graphs will be isomorphic to each other. Therefore, for any integer $q$ there is only $1$ graph $G$ constructed with this theorem which is non-isomorphic and $Q$-cospectral to its partial transpose.
	
	\begin{example}
		In the figure below\\		
		\begin{tikzpicture}[scale = 1.5]
		\node at (-.5, .5) {$G = $};
		\draw[fill] (0, 0) circle [radius= 1.5pt];
		\node[below right] at (0, 0) {$21$};
		\draw[fill] (1, 0) circle [radius= 1.5pt];
		\node[below right] at (1, 0) {$22$};
		\draw[fill] (2, 0) circle [radius= 1.5pt];
		\node[below right] at (2, 0) {$23$};
		\draw[fill] (0, 1) circle [radius= 1.5pt];
		\node[below right] at (0, 1) {$11$};
		\draw[fill] (1, 1) circle [radius= 1.5pt];
		\node[below right] at (1, 1) {$12$};
		\draw[fill] (2, 1) circle [radius= 1.5pt];
		\node[below right] at (2, 1) {$13$};
		\draw (0, 1) -- (1, 1) -- (2, 1);
		\draw (0, 1) -- (0, 0);
		\draw (0, 1) -- (1, 0) -- (2, 0);
		\draw (0, 0) .. controls (1, -.4) .. (2, 0);
		\draw (0, 1) .. controls (1, 1.4) .. (2, 1);
		\end{tikzpicture}
		\begin{tikzpicture}[scale = 1.5]
		\node at (-1, .5) {and $G^\tau = $};
		\draw[fill] (0, 0) circle [radius= 1.5pt];
		\node[below right] at (0, 0) {$21$};
		\draw[fill] (1, 0) circle [radius= 1.5pt];
		\node[below right] at (1, 0) {$22$};
		\draw[fill] (2, 0) circle [radius= 1.5pt];
		\node[below right] at (2, 0) {$23$};
		\draw[fill] (0, 1) circle [radius= 1.5pt];
		\node[below right] at (0, 1) {$11$};
		\draw[fill] (1, 1) circle [radius= 1.5pt];
		\node[below right] at (1, 1) {$12$};
		\draw[fill] (2, 1) circle [radius= 1.5pt];
		\node[below right] at (2, 1) {$13$};
		\draw (0, 1) -- (1, 1) -- (2, 1);
		\draw (0, 1) -- (0, 0);
		\draw (1, 0) -- (2, 0);
		\draw (0, 0) -- (1, 1);
		\draw (0, 0) .. controls (1, -.4) .. (2, 0);
		\draw (0, 1) .. controls (1, 1.4) .. (2, 1);
		\end{tikzpicture}\\
		are $Q$-cospectral, non-isomorphic graphs. The graph $G$ is constructed by the above theorem.
	\end{example}

	\begin{corollary}\label{corollary2} 
		Let the subgraphs $\langle C_1 \rangle_{G_0}$ be a $q$-cycle and $\langle C_2 \rangle_{G_0}$ be a  path graph of length $q$ as well as $\langle C_1, C_2\rangle_{G_0}$ is an empty graph. Construct a new graph $G$ by adding $(v_{11}, v_{1q}), (v_{11}, v_{21})$ and $(v_{11}, v_{2q})$. In addition, any edge of the form $(v_{1k}, v_{2k})$ can be included in $G$. The new graph $G$ is $Q$-cospectral to its partial transpose.
	\end{corollary}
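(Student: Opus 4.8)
The plan is to reuse the mechanism of the proof of Theorem~\ref{theorem1}: to show that the TU subgraphs of $G$ and those of $G^\tau$ are comparable, i.e.\ that $\sum_{H\in\mathcal{U}_j(G)}W(H)=\sum_{H\in\mathcal{U}_j(G^\tau)}W(H)$ for every $j$, whence (\ref{char_coeff}) gives $Q_G(\lambda)=Q_{G^\tau}(\lambda)$. Since the corollary only asserts cospectrality --- not non-isomorphism, unlike Theorem~\ref{theorem1} and Corollary~\ref{corollary1} --- no separating invariant is needed; indeed, as in the remark after Corollary~\ref{corollary1}, when enough matching edges $(v_{1k},v_{2k})$ are present $G$ and $G^\tau$ will frequently be isomorphic.

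First I would write $G$ out as $G_0$ together with $(v_{11},v_{1q})$, $(v_{11},v_{21})$, $(v_{11},v_{2q})$ and the chosen matching edges, and record the partial transpose: among the edges of $\langle C_1,C_2\rangle_G$, only $(v_{11},v_{2q})$ has distinct cluster indices, so partial transpose merely replaces $(v_{11},v_{2q})$ by $(v_{1q},v_{21})$ and fixes all other edges --- in particular each matching edge $(v_{1k},v_{2k})$ is invariant because its two indices agree. Next I would take the census of cycles of $G$, which differs from the one in Theorem~\ref{theorem1} (read at $i=1$, $j=q$) in two ways: (a) $\langle C_2\rangle_{G_0}$ is a path, not a $q$-cycle, so no cycle lies inside $\langle C_2\rangle$, and the only cycle of $G_0\cup\{(v_{11},v_{1q}),(v_{11},v_{21}),(v_{11},v_{2q})\}$ meeting $C_2$ is the length-$(q+1)$ cycle through $(v_{11},v_{21})$, the $C_2$-path, and $(v_{11},v_{2q})$; and (b) each matching edge $(v_{1k},v_{2k})$ spawns further cycles, each running along an arc of the $C_1$-cycle, a sub-path of $C_2$, and two of the edges joining $C_1$ to $C_2$. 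I would then split the cycles into those avoiding $(v_{11},v_{2q})$ --- fixed by partial transpose, hence contributing identically to $Q_G$ and $Q_{G^\tau}$ --- and those through $(v_{11},v_{2q})$, each of which must be paired with a cycle of $G^\tau$ through $(v_{1q},v_{21})$ of equal length whose edges outside $\langle C_1,C_2\rangle$ meet the rest of the graph in the same pattern; the length accounting uses only the elementary fact that two complementary arcs of the $q$-cycle $\langle C_1\rangle$ have lengths summing to $q$. The crux remains, as in Example~\ref{critical_graphs} and Theorem~\ref{theorem1}, that $\{(v_{11},v_{1q}),(v_{11},v_{21}),(v_{11},v_{2q})\}$ forms a tree $K$ in $G$ whose partial transpose is the odd triangle $(v_{11},v_{1q},v_{21})$, with $W(K)=W(K^\tau)=4$.

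The main obstacle is the bookkeeping forced by the optional matching edges together with the replacement of a $C_2$-cycle by a $C_2$-path. A naive edge-swap does not carry $\mathcal{U}_j(G)$ into $\mathcal{U}_j(G^\tau)$, since deleting $(v_{11},v_{2q})$ and inserting $(v_{1q},v_{21})$ can change the component structure (and parity) of a TU subgraph; one must argue as in Theorem~\ref{theorem1}, tracking, for each cyclic component $\delta$ of a TU subgraph of $G$, a cycle $\delta'$ of $G^\tau$ of the same length against which the attached forests --- and hence the weights $4^c\prod_i(1+|E(T_i)|)$ --- biject. Checking that this matching still closes up when several matching edges are present at once, and that tree components on the $C_2$-side balance against the cyclic and tree structure of the $C_1$-cycle, is the delicate step; granting it, summation over $j$ yields $Q_G(\lambda)=Q_{G^\tau}(\lambda)$, and the optional inclusion of any subset of the edges $(v_{1k},v_{2k})$ is handled uniformly since each one is partial-transpose invariant.
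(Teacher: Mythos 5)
Your proposal follows exactly the route the paper takes: its entire proof of this corollary is the single sentence ``Proof is similar to theorem \ref{theorem1},'' i.e.\ compare TU subgraphs of $G$ and $G^\tau$ via the cycle census and the $K$-versus-$K^\tau$ weight identity from Example \ref{critical_graphs}. Your write-up is in fact more careful than the paper's, since you explicitly flag (without fully discharging) the bookkeeping needed when $\langle C_2\rangle$ is a path rather than a cycle and when several matching edges $(v_{1k},v_{2k})$ are present --- a step the paper silently omits.
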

	
	\begin{proof}
		Proof is similar to theorem \ref{theorem1}. 
	\end{proof}
	\begin{example}
		In the graph $G$ we have taken a $5$-cycle as $\langle C_1 \rangle$ and a path of length $5$ as $\langle C_2 \rangle$. We have added the edges $(v_{11}, v_{25})$ and $(v_{11}, v_{12})$ for generating non-isomorphic graphs under partial transpose. Also, we have added $(v_{12}, v_{22}), (v_{13}, v_{23}), (v_{15}, v_{25})$ which remains unchanged under partial transpose. The resultant graph $G$ and its partial transpose are:\\
		\begin{tikzpicture}
		\node at (0, .5) {$G = $};
		\draw[fill] (1, 1) circle [radius = 1.5pt];
		\node[below right] at (1,1) {$11$};
		\draw[fill] (2, 1) circle [radius = 1.5pt];
		\node[below right] at (2,1) {$12$};
		\draw[fill] (3, 1) circle [radius = 1.5pt];
		\node[below right] at (3,1) {$13$};
		\draw[fill] (4, 1) circle [radius = 1.5pt];
		\node[below right] at (4,1) {$14$};
		\draw[fill] (5, 1) circle [radius = 1.5pt];
		\node[below right] at (5,1) {$15$};
		\draw[fill] (1, 0) circle [radius = 1.5pt];
		\node[below right] at (1,0) {$21$};
		\draw[fill] (2, 0) circle [radius = 1.5pt];
		\node[below right] at (2,0) {$22$};
		\draw[fill] (3, 0) circle [radius = 1.5pt];
		\node[below right] at (3,0) {$23$};
		\draw[fill] (4, 0) circle [radius = 1.5pt];
		\node[below right] at (4,0) {$24$};
		\draw[fill] (5, 0) circle [radius = 1.5pt];
		\node[below right] at (5,0) {$25$};
		\draw (1, 0) -- (2, 0) -- (3, 0) -- (4, 0) -- (5, 0) -- (5, 1) -- (4, 1) -- (3, 1) -- (2, 1) -- (1, 1) -- (1, 0);
		\draw (1, 1) -- (5, 0);
		\draw (2, 1) -- (2, 0);
		\draw (3, 1) -- (3, 0);
		\draw (1,1) .. controls (3, 1.5) .. (5, 1); 
		\end{tikzpicture}
		\begin{tikzpicture}
		\node at (0, .5) {$G^\tau = $};
		\draw[fill] (1, 1) circle [radius = 1.5pt];
		\node[below right] at (1,1) {$11$};
		\draw[fill] (2, 1) circle [radius = 1.5pt];
		\node[below right] at (2,1) {$12$};
		\draw[fill] (3, 1) circle [radius = 1.5pt];
		\node[below right] at (3,1) {$13$};
		\draw[fill] (4, 1) circle [radius = 1.5pt];
		\node[below right] at (4,1) {$14$};
		\draw[fill] (5, 1) circle [radius = 1.5pt];
		\node[below right] at (5,1) {$15$};
		\draw[fill] (1, 0) circle [radius = 1.5pt];
		\node[below right] at (1,0) {$21$};
		\draw[fill] (2, 0) circle [radius = 1.5pt];
		\node[below right] at (2,0) {$22$};
		\draw[fill] (3, 0) circle [radius = 1.5pt];
		\node[below right] at (3,0) {$23$};
		\draw[fill] (4, 0) circle [radius = 1.5pt];
		\node[below right] at (4,0) {$24$};
		\draw[fill] (5, 0) circle [radius = 1.5pt];
		\node[below right] at (5,0) {$25$};
		\draw (1, 0) -- (2, 0) -- (3, 0) -- (4, 0) -- (5, 0) -- (5, 1) -- (4, 1) -- (3, 1) -- (2, 1) -- (1, 1) -- (1, 0);
		\draw (1, 0) -- (5, 1);
		\draw (2, 1) -- (2, 0);
		\draw (3, 1) -- (3, 0);
		\draw (1,1) .. controls (3, 1.5) .. (5, 1); 
		\end{tikzpicture}\\
		It can be easily verified that $G$ and $G^\tau$ are $Q$-cospectral.
	\end{example}

	\section{Bigger families of non-isomorphic $Q$-cospectral graphs}
	
	In the last section, we have mentioned structures of graphs which are non-isomorphic and $Q$-cospectral to their partial transpose. Given any graph of this kind there are infinitely many graphs of bigger size  which are also non-isomorphic and $Q$-cospectral to their partial transpose. Now we shall state a number of procedures for constructing these graphs.
	
	\begin{procedure}\label{procedure1}
		Let $G$ be $Q$-cospectral to its partial transpose $G^\tau$. Construct a new graph $G_1 = G \cup G'$ such that $G'$ is isomorphic to its partial transpose. Then, $G_1$ is $Q$-cospectral to $G_1^\tau$.
	\end{procedure}		
	
	\begin{proof}
		As $G_1 = G \cup G'$, $G_1^\tau = G^\tau \cup (G')^\tau = G^\tau \cup G'$, as $G'$ is isomorphic to its partial transpose. Now, $Q_{G_1}(\lambda) = Q_{G \cup G'}(\lambda) = Q_G(\lambda)Q_{G'}(\lambda)$. Also, $Q_{G_1^\tau}(\lambda) =  Q_{G^\tau}(\lambda)Q_{G'}(\lambda)$. We assumed that $G$ and $G^\tau$ are cospectral. Hence, $Q_{G}(\lambda) = Q_{G^\tau}(\lambda)$. Combining these all we get, $Q_{G_1}(\lambda) = Q_{G_1^\tau}(\lambda)$. Therefore, $G_1$ and $G_1^\tau$ are cospectral.
	\end{proof}
	
	If in the above theorem $G$ is non-isomorphic and $Q$-cospectral to $G^\tau$ then the resultant graph $G_1$ is also non-isomorphic and $Q$-cospectral to $G_1^\tau$. Note that, using the above result arbitrary large non-isomorphc and $Q$ cospectral graphs can be generated. For simplicity, one may consider any partially symmetric graph as $G'$.
	
	\begin{example}
		Consider the graph $G = K$ depicted in example \ref{critical_graphs}, for simplicity. Add vertices $v_{13}, v_{23}$ and an edge $(v_{13}, v_{23})$ to construct new graph $G_1$. Note that, here $G'$ consists of a single edge $(v_{13}, v_{23})$ which is a partial symmetric.  
		$$G_1 = \xymatrix{\bullet_{11} \ar@{-}[d] & \bullet_{12} \ar@{-}[dl] & \bullet_{13} \ar@{-}[d] \\ \bullet_{21} \ar@{-}[r] & \bullet_{22} & \bullet_{23} } \hspace{2 cm} G_1^\tau = \xymatrix{\bullet_{11} \ar@{-}[d] \ar@{-}[dr] & \bullet_{12} & \bullet_{13} \ar@{-}[d] \\ \bullet_{21} \ar@{-}[r] & \bullet_{22} & \bullet_{23} }$$
		We can easily verify that $G_1$ and $G_1^\tau$, depicted above, are non-isomorphic, and $Q$-cospectral graphs.
	\end{example}
	
	The above result can be visualised in terms of matrices. Let $A$ and $B$ be the signless Laplacian matrices of two $Q$-cospectral graphs $G$ and $G^\tau$, that is, $\Lambda(A) = \Lambda(B)$. Let $C$ be signless Laplacian matrix of $G'$. Now the signless Laplacian matrices of $G_1$ and $G_1^\tau$ are given by
	\begin{equation}
	Q(G_1) = \begin{bmatrix} A & 0 \\ 0 & C\end{bmatrix} ~\text{and}~ Q(G_1^\tau) = \begin{bmatrix} B & 0 \\ 0 & C\end{bmatrix}.
	\end{equation}
	From spectral properties of block matrices we come to the conclusion that $\Lambda(Q(G_1)) = \Lambda(Q(G_1^\tau))$. 
	
	According to the above procedure, the new graph $G_1$ is a disconnected graph with at least two components. One is isomorphic to its partial transpose. Another one makes $G_1$ and $G_1^\tau$ non-isomorphic. Below we generate connected graphs which are non-isomorphic, and $Q$-cospectral to their partial transpose.
	
	\begin{procedure}\label{procedure2}
		Let $G$ be a graph derived by theorem \ref{theorem1} or its corollaries which contains the edge $(v_{1, i}, v_{2, j})$ for $i \neq j$. Now add any number of pairs of edges $\{(v_{1, k}, v_{1, l}), (v_{2, k}, v_{2, l}): k, l \notin \{i, j\}\}$ with $G$. The new graph $G_1$ is $Q$-cospectral to its partial transpose. 
	\end{procedure}
	
	\begin{proof}
		Checking $Q$-cospectrality of $G_1$ and $G_1^\tau$ is similar to that of the theorem 2. Non-isomorphims is generated by the alteration of an edge $(v_{1, i}, v_{2, j}), i \neq j$ during partial transpose and non existence of $(v_{2, i}, v_{2, j})$.
	\end{proof}

	For any vertex in a $q$-circle there are $(q - 2)$ non-adjacent vertices. Hence, there are $q(q - 2)$ possible edges which may construct inside $\langle C_1 \rangle$. But one pair $v_{1, i}, v_{1, j}$ will not be considered. For any choice $(v_{1, k}, v_{1, l})$ of these $(q(q - 2) - 1)$ edges in $\langle C_1 \rangle$ we need to add $(v_{2, k}, v_{2, l})$ in $\langle C_2 \rangle$. Therefore, given any graph generated by theorem \ref{theorem1} there are at most $2^{(q(q - 2) - 1)}$ graphs constructed by procedure \ref{procedure1}, which are non-isomorphic and $Q$-cospectral to their partial transpose.
	
	\begin{example}
		The graph $G$ in the figure below in generated by theorem 2 which is non-isomorphic and $Q$-cospectral to its partial transpose. Here $i = 3$ and $j = 4$.\\
		\begin{tikzpicture}[scale = 1]
			\node at (-1, .5) {$G = $};
			\draw[fill] (0, 1) circle [radius = 1.5 pt];
			\node[below right] at (0, 1) {$11$};
			\draw[fill] (1, 1) circle [radius = 1.5 pt];
			\node[below right] at (1, 1) {$12$};
			\draw[fill] (2, 1) circle [radius = 1.5 pt];
			\node[below right] at (2, 1) {$13$};
			\draw[fill] (3, 1) circle [radius = 1.5 pt];
			\node[below right] at (3, 1) {$14$};
			\draw[fill] (4, 1) circle [radius = 1.5 pt];
			\node[below right] at (4, 1) {$15$};
			\draw[fill] (0, 0) circle [radius = 1.5 pt];
			\node[below right] at (0, 0) {$21$};
			\draw[fill] (1, 0) circle [radius = 1.5 pt];
			\node[below right] at (1, 0) {$22$};
			\draw[fill] (2, 0) circle [radius = 1.5 pt];
			\node[below right] at (2, 0) {$23$};
			\draw[fill] (3, 0) circle [radius = 1.5 pt];
			\node[below right] at (3, 0) {$24$};
			\draw[fill] (4, 0) circle [radius = 1.5 pt];
			\node[below right] at (4, 0) {$25$};
			\draw (0, 0) -- (1, 0) -- (2, 0) -- (2,1) -- (3, 0) -- (4, 0);
			\draw (0, 1) -- (1, 1) -- (2, 1) -- (3, 1) -- (4, 1);
			\draw (0, 0) .. controls (2, -1) .. (4, 0);
			\draw (0, 1) .. controls (2, 2) .. (4, 1);		
		\end{tikzpicture}
		\begin{tikzpicture}[scale = 1]
			\node at (-1, .5) {$G^\tau = $};
			\draw[fill] (0, 1) circle [radius = 1.5 pt];
			\node[below right] at (0, 1) {$11$};
			\draw[fill] (1, 1) circle [radius = 1.5 pt];
			\node[below right] at (1, 1) {$12$};
			\draw[fill] (2, 1) circle [radius = 1.5 pt];
			\node[below right] at (2, 1) {$13$};
			\draw[fill] (3, 1) circle [radius = 1.5 pt];
			\node[below right] at (3, 1) {$14$};
			\draw[fill] (4, 1) circle [radius = 1.5 pt];
			\node[below right] at (4, 1) {$15$};
			\draw[fill] (0, 0) circle [radius = 1.5 pt];
			\node[below right] at (0, 0) {$21$};
			\draw[fill] (1, 0) circle [radius = 1.5 pt];
			\node[below right] at (1, 0) {$22$};
			\draw[fill] (2, 0) circle [radius = 1.5 pt];
			\node[below right] at (2, 0) {$23$};
			\draw[fill] (3, 0) circle [radius = 1.5 pt];
			\node[below right] at (3, 0) {$24$};
			\draw[fill] (4, 0) circle [radius = 1.5 pt];
			\node[below right] at (4, 0) {$25$};
			\draw (0, 0) -- (1, 0) -- (2, 0) -- (2,1);
			\draw (3, 0) -- (4, 0);
			\draw (2, 0) -- (3, 1);
			\draw (0, 1) -- (1, 1) -- (2, 1) -- (3, 1) -- (4, 1);
			\draw (0, 0) .. controls (2, -1) .. (4, 0);
			\draw (0, 1) .. controls (2, 2) .. (4, 1);		
		\end{tikzpicture}
		We add a pair of edges $(v_{1 2}, v_{15})$ and $(v_{22}, v_{25})$ with $G$ to form $G_1$ below. It can be verified that $G_1$ and $G_1^\tau$ are non-isomorphic and $Q$-cospectral.\\
		\begin{tikzpicture}[scale = 1]
			\node at (-1, .5) {$G_1 = $};
			\draw[fill] (0, 1) circle [radius = 1.5 pt];
			\node[below right] at (0, 1) {$11$};
			\draw[fill] (1, 1) circle [radius = 1.5 pt];
			\node[below right] at (1, 1) {$12$};
			\draw[fill] (2, 1) circle [radius = 1.5 pt];
			\node[below right] at (2, 1) {$13$};
			\draw[fill] (3, 1) circle [radius = 1.5 pt];
			\node[below right] at (3, 1) {$14$};
			\draw[fill] (4, 1) circle [radius = 1.5 pt];
			\node[below right] at (4, 1) {$15$};
			\draw[fill] (0, 0) circle [radius = 1.5 pt];
			\node[below right] at (0, 0) {$21$};
			\draw[fill] (1, 0) circle [radius = 1.5 pt];
			\node[below right] at (1, 0) {$22$};
			\draw[fill] (2, 0) circle [radius = 1.5 pt];
			\node[below right] at (2, 0) {$23$};
			\draw[fill] (3, 0) circle [radius = 1.5 pt];
			\node[below right] at (3, 0) {$24$};
			\draw[fill] (4, 0) circle [radius = 1.5 pt];
			\node[below right] at (4, 0) {$25$};
			\draw (0, 0) -- (1, 0) -- (2, 0) -- (2,1) -- (3, 0) -- (4, 0);
			\draw (0, 1) -- (1, 1) -- (2, 1) -- (3, 1) -- (4, 1);
			\draw (0, 0) .. controls (2, -1) .. (4, 0);
			\draw (0, 1) .. controls (2, 2) .. (4, 1);
			\draw (1, 1) ..controls(2.5 , 1.5) .. (4, 1);
			\draw (1, 0) ..controls(2.5 , -.5) .. (4, 0);
		\end{tikzpicture}
		\begin{tikzpicture}[scale = 1]
			\node at (-1, .5) {$G_1^\tau = $};
			\draw[fill] (0, 1) circle [radius = 1.5 pt];
			\node[below right] at (0, 1) {$11$};
			\draw[fill] (1, 1) circle [radius = 1.5 pt];
			\node[below right] at (1, 1) {$12$};
			\draw[fill] (2, 1) circle [radius = 1.5 pt];
			\node[below right] at (2, 1) {$13$};
			\draw[fill] (3, 1) circle [radius = 1.5 pt];
			\node[below right] at (3, 1) {$14$};
			\draw[fill] (4, 1) circle [radius = 1.5 pt];
			\node[below right] at (4, 1) {$15$};
			\draw[fill] (0, 0) circle [radius = 1.5 pt];
			\node[below right] at (0, 0) {$21$};
			\draw[fill] (1, 0) circle [radius = 1.5 pt];
			\node[below right] at (1, 0) {$22$};
			\draw[fill] (2, 0) circle [radius = 1.5 pt];
			\node[below right] at (2, 0) {$23$};
			\draw[fill] (3, 0) circle [radius = 1.5 pt];
			\node[below right] at (3, 0) {$24$};
			\draw[fill] (4, 0) circle [radius = 1.5 pt];
			\node[below right] at (4, 0) {$25$};
			\draw (0, 0) -- (1, 0) -- (2, 0) -- (2,1);
			\draw (3, 0) -- (4, 0);
			\draw (2, 0) -- (3, 1);
			\draw (0, 1) -- (1, 1) -- (2, 1) -- (3, 1) -- (4, 1);
			\draw (0, 0) .. controls (2, -1) .. (4, 0);
			\draw (0, 1) .. controls (2, 2) .. (4, 1);
			\draw (1, 1) ..controls(2.5 , 1.5) .. (4, 1);
			\draw (1, 0) ..controls(2.5 , -.5) .. (4, 0);	
		\end{tikzpicture}
		
		We can easily verify that the induced subgraphs generated by the vertex set $\{v_{1, 3}, v_{1, 4}, v_{2, 3}, v_{2, 4}\}$ in $G, G^\tau, G_1, G_1^\tau$ are non-isomorphic. This characteristic plays a key role to make $G$ and $G_1$ non-isomorphic to their partial transposes.
	\end{example}

	Partial transpose also does not alter the partially symmetric structures inside $\langle C_\mu, C_\nu \rangle$. Therefore, we can induce partially symmetric subgraphs with $\langle C_\mu, C_\nu \rangle$ for generating new $Q$-cospectral graphs.
	
	\begin{procedure}\label{procedure3}
		Let $G$ be a graph generated by theorems \ref{theorem1}, or its corollaries, or procedure \ref{procedure2} containing the edge $(v_{1, i}, v_{2, j})$ for $i \neq j$. Add edges from the set $\{(v_{1, k}, v_{2, l}): \forall k, l \notin \{i, j\}\}$ such that the new edges construct a partial symmetric subgraph among themselves with respect to the existing vertex labellings. Then the new graph $G_1$ is $Q$-cospectral to its partial transpose.
	\end{procedure}

	\begin{proof}
		We can choose $Q$-cospectrality and non-negativity as earlier. A partial symmetric subgraph is unaltered during partial transpose. Also the newly added partially symmetric subgraph does not influence the edge $(v_{1, i}, v_{2, j})$ to generate non-isomorphic graphs $G_1$ and $G_1^\tau$.
	\end{proof}

	In this procedure, we construct a partially symmetric subgraph inside the graph $G$ to construct new graph $G_1$. In the formation of partially symmetric subgraph $(q - 2) \geq 0$ vertices of a cluster may participate. The lemma \ref{partially_symmetric_count} suggests that $2^{\frac{q - 2}{2}(3q - 7)}$ graphs may be considered by this procedure.
	
	\begin{example}
		We begin this example with a graph $G$ which is produced by procedure \ref{procedure2}. Clearly, $G$ is non-isomorphic and $Q$-cospectral to $G^\tau$, which are depicted below:\\
		\begin{tikzpicture}
			\node at (-1, .5) {$G = $};
			\draw[fill] (0, 1) circle [radius = 1.5 pt];
			\node[below right] at (0, 1) {$11$};
			\draw[fill] (1, 1) circle [radius = 1.5 pt];
			\node[below right] at (1, 1) {$12$};
			\draw[fill] (2, 1) circle [radius = 1.5 pt];
			\node[below right] at (2, 1) {$13$};
			\draw[fill] (3, 1) circle [radius = 1.5 pt];
			\node[below right] at (3, 1) {$14$};
			\draw[fill] (4, 1) circle [radius = 1.5 pt];
			\node[below right] at (4, 1) {$15$};
			\draw[fill] (0, 0) circle [radius = 1.5 pt];
			\node[below right] at (0, 0) {$21$};
			\draw[fill] (1, 0) circle [radius = 1.5 pt];
			\node[below right] at (1, 0) {$22$};
			\draw[fill] (2, 0) circle [radius = 1.5 pt];
			\node[below right] at (2, 0) {$23$};
			\draw[fill] (3, 0) circle [radius = 1.5 pt];
			\node[below right] at (3, 0) {$24$};
			\draw[fill] (4, 0) circle [radius = 1.5 pt];
			\node[below right] at (4, 0) {$25$};
			\draw (0, 1) -- (1, 1) -- (2, 1) -- (3, 1) -- (4, 1);
			\draw (0, 1) -- (0, 0);
			\draw (0, 1) -- (1, 0);
			\draw (1, 0) -- (2, 0) -- (3, 0) -- (4, 0);
			\draw (0 , 0) .. controls (2, -1) .. (4, 0);
			\draw (0 , 1) .. controls (2, 2) .. (4, 1);
			\draw (2 , 1) .. controls (3, 1.3) .. (4, 1);
			\draw (2 , 0) .. controls (3, -.3) .. (4, 0);
		\end{tikzpicture}
		\begin{tikzpicture}
			\node at (-1, .5) {$G^\tau = $};
			\draw[fill] (0, 1) circle [radius = 1.5 pt];
			\node[below right] at (0, 1) {$11$};
			\draw[fill] (1, 1) circle [radius = 1.5 pt];
			\node[below right] at (1, 1) {$12$};
			\draw[fill] (2, 1) circle [radius = 1.5 pt];
			\node[below right] at (2, 1) {$13$};
			\draw[fill] (3, 1) circle [radius = 1.5 pt];
			\node[below right] at (3, 1) {$14$};
			\draw[fill] (4, 1) circle [radius = 1.5 pt];
			\node[below right] at (4, 1) {$15$};
			\draw[fill] (0, 0) circle [radius = 1.5 pt];
			\node[below right] at (0, 0) {$21$};
			\draw[fill] (1, 0) circle [radius = 1.5 pt];
			\node[below right] at (1, 0) {$22$};
			\draw[fill] (2, 0) circle [radius = 1.5 pt];
			\node[below right] at (2, 0) {$23$};
			\draw[fill] (3, 0) circle [radius = 1.5 pt];
			\node[below right] at (3, 0) {$24$};
			\draw[fill] (4, 0) circle [radius = 1.5 pt];
			\node[below right] at (4, 0) {$25$};
			\draw (0, 1) -- (1, 1) -- (2, 1) -- (3, 1) -- (4, 1);
			\draw (0, 1) -- (0, 0);
			\draw (1, 1) -- (0, 0);
			\draw (1, 0) -- (2, 0) -- (3, 0) -- (4, 0);
			\draw (0 , 0) .. controls (2, -1) .. (4, 0);
			\draw (0 , 1) .. controls (2, 2) .. (4, 1);
			\draw (2 , 1) .. controls (3, 1.3) .. (4, 1);
			\draw (2 , 0) .. controls (3, -.3) .. (4, 0);
		\end{tikzpicture}
		Now we add a partially symmetric subgraph with $G$. It consists of the edge set $\{(v_{13}, v_{23}), (v_{15}, v_{25}), (v_{13}, v_{25}), (v_{15}, v_{23})\}$. The new graph $G_1$ is also non-isomorphic and $Q$-cospectral to its partial transpose, which are as follows:\\
		\begin{tikzpicture}
			\node at (-1, .5) {$G_1 = $};
			\draw[fill] (0, 1) circle [radius = 1.5 pt];
			\node[below right] at (0, 1) {$11$};
			\draw[fill] (1, 1) circle [radius = 1.5 pt];
			\node[below right] at (1, 1) {$12$};
			\draw[fill] (2, 1) circle [radius = 1.5 pt];
			\node[below right] at (2, 1) {$13$};
			\draw[fill] (3, 1) circle [radius = 1.5 pt];
			\node[below right] at (3, 1) {$14$};
			\draw[fill] (4, 1) circle [radius = 1.5 pt];
			\node[below right] at (4, 1) {$15$};
			\draw[fill] (0, 0) circle [radius = 1.5 pt];
			\node[below right] at (0, 0) {$21$};
			\draw[fill] (1, 0) circle [radius = 1.5 pt];
			\node[below right] at (1, 0) {$22$};
			\draw[fill] (2, 0) circle [radius = 1.5 pt];
			\node[below right] at (2, 0) {$23$};
			\draw[fill] (3, 0) circle [radius = 1.5 pt];
			\node[below right] at (3, 0) {$24$};
			\draw[fill] (4, 0) circle [radius = 1.5 pt];
			\node[below right] at (4, 0) {$25$};
			\draw (0, 1) -- (1, 1) -- (2, 1) -- (3, 1) -- (4, 1);
			\draw (0, 1) -- (0, 0);
			\draw (0, 1) -- (1, 0);
			\draw (1, 0) -- (2, 0) -- (3, 0) -- (4, 0);
			\draw (2, 0) -- (2, 1);
			\draw (4, 0) -- (4, 1);
			\draw (2, 0) -- (4, 1);
			\draw (4, 0) -- (2, 1);
			\draw (0 , 0) .. controls (2, -1) .. (4, 0);
			\draw (0 , 1) .. controls (2, 2) .. (4, 1);
			\draw (2 , 1) .. controls (3, 1.3) .. (4, 1);
			\draw (2 , 0) .. controls (3, -.3) .. (4, 0);
		\end{tikzpicture}
		\begin{tikzpicture}
			\node at (-1, .5) {$G_1^\tau = $};
			\draw[fill] (0, 1) circle [radius = 1.5 pt];
			\node[below right] at (0, 1) {$11$};
			\draw[fill] (1, 1) circle [radius = 1.5 pt];
			\node[below right] at (1, 1) {$12$};
			\draw[fill] (2, 1) circle [radius = 1.5 pt];
			\node[below right] at (2, 1) {$13$};
			\draw[fill] (3, 1) circle [radius = 1.5 pt];
			\node[below right] at (3, 1) {$14$};
			\draw[fill] (4, 1) circle [radius = 1.5 pt];
			\node[below right] at (4, 1) {$15$};
			\draw[fill] (0, 0) circle [radius = 1.5 pt];
			\node[below right] at (0, 0) {$21$};
			\draw[fill] (1, 0) circle [radius = 1.5 pt];
			\node[below right] at (1, 0) {$22$};
			\draw[fill] (2, 0) circle [radius = 1.5 pt];
			\node[below right] at (2, 0) {$23$};
			\draw[fill] (3, 0) circle [radius = 1.5 pt];
			\node[below right] at (3, 0) {$24$};
			\draw[fill] (4, 0) circle [radius = 1.5 pt];
			\node[below right] at (4, 0) {$25$};
			\draw (0, 1) -- (1, 1) -- (2, 1) -- (3, 1) -- (4, 1);
			\draw (0, 1) -- (0, 0);
			\draw (1, 1) -- (0, 0);
			\draw (1, 0) -- (2, 0) -- (3, 0) -- (4, 0);
			\draw (2, 0) -- (2, 1);
			\draw (4, 0) -- (4, 1);
			\draw (2, 0) -- (4, 1);
			\draw (4, 0) -- (2, 1);
			\draw (0 , 0) .. controls (2, -1) .. (4, 0);
			\draw (0 , 1) .. controls (2, 2) .. (4, 1);
			\draw (2 , 1) .. controls (3, 1.3) .. (4, 1);
			\draw (2 , 0) .. controls (3, -.3) .. (4, 0);
		\end{tikzpicture}
	\end{example}
	
	Procedure \ref{procedure2} and \ref{procedure2} increase the edges in a graph $G$ such that the new graph $G_1$ is $Q$-cospectral to its partial transpose. We can construct large families of graphs by adding both vertices and edges, which is discuss in the next procedure.
	
	\begin{procedure}\label{procedure4}
		Let $G$ be a graph generated by using any of the above theorems and procedures which has an edge $(v_{1, i}, v_{2, j})$ such that $(v_{1, j}, v_{2, i}) \notin E(G)$. Add equal number of vertices with every clusters. New edges may be constructed by performing any one or more of the following operations:
		\begin{enumerate}
			\item
				Add arbitrary set of edges joining the new vertices within the clusters.
			\item
				Edges can be added between the old and new vertices inside the cluaters, such that, the vertices $v_{1, i}, v_{1, j}, v_{2, i}$, and $v_{2, j}$ are not adjacent to any of the new vertices.
			\item
				New edges can be included between the new vertices belonging to both clusters such that they form a partially symmetric subgraph.
		\end{enumerate}
		The new graph $G_1$ is non-isomorphic and $Q$-cospectral to its partial transpose.
	\end{procedure}
	
	\begin{proof}
		One can check $G_1$ and $G_1^\tau$ are $Q$-cospectral and non-isomorphic as earlier. Note that, the induced subgraph generated by new vertices and edges is a partially symmetric subgraph which does not influence in generating non-isomorphic $Q$-cospectral pairs.
	\end{proof}

	\begin{example}
		Consider the graph $G$ depicted in the example 4. It has an edge $(v_{11}, v_{22})$ such that $(v_{12}, v_{21})$ is missing. It has six vertices arranged into two clusters. We add three new vertices to every cluster. In the cluster $\langle C_1 \rangle$ we add a tree and in cluster $\langle C_2 \rangle$ we include a 3-cycle with a hair. They are connected to vertices $v_{13}$ and $v_{23}$ which are not in $\{v_{11}, v_{22}, v_{12}, v_{21}\}$. Also, we have added an edge $(v_{14}, v_{24})$, which forms a partially symmetric subgraph in $\langle C_1, C_2 \rangle$. The resultant graph: \\
		\begin{tikzpicture}[scale = 1.5]
			\node at (-1, .5) {$G_1 = $};
			\draw[fill] (0, 1) circle [radius = 1.5 pt];
			\node[below right] at (0,1) {$11$};
			\draw[fill] (1, 1) circle [radius = 1.5 pt];
			\node[below right] at (1,1) {$12$};
			\draw[fill] (2, 1) circle [radius = 1.5 pt];
			\node[below right] at (2,1) {$13$};
			\draw[fill] (3, 1) circle [radius = 1.5 pt];
			\node[below right] at (3,1) {$14$};
			\draw[fill] (4, 1) circle [radius = 1.5 pt];
			\node[below right] at (4,1) {$15$};
			\draw[fill] (5, 1) circle [radius = 1.5 pt];
			\node[below right] at (5,1) {$16$};
			\draw[fill] (0, 0) circle [radius = 1.5 pt];
			\node[below right] at (0, 0) {$21$};
			\draw[fill] (1, 0) circle [radius = 1.5 pt];
			\node[below right] at (1,0) {$22$};
			\draw[fill] (2, 0) circle [radius = 1.5 pt];
			\node[below right] at (2,0) {$23$};
			\draw[fill] (3, 0) circle [radius = 1.5 pt];
			\node[below right] at (3, 0) {$24$};
			\draw[fill] (4, 0) circle [radius = 1.5 pt];
			\node[below right] at (4, 0) {$25$};
			\draw[fill] (5, 0) circle [radius = 1.5 pt];
			\node[below right] at (5,0) {$26$};
			\draw (0, 1) -- (1, 1) -- (2, 1);
			\draw (0, 1) -- (0, 0);
			\draw (0, 1) -- (1, 0);
			\draw (1, 0) -- (2, 0);
			\draw (0, 1) .. controls (1, 1.35) .. (2, 1);
			\draw (0, 0) .. controls (1, -.35) .. (2, 0);
			\draw (2, 1) -- (3, 1) -- (4, 1);
			\draw (2, 0) -- (3, 0) -- (4, 0);
			\draw (3, 1) .. controls (4, 1.35) .. (5, 1);
			\draw (2, 0) .. controls (3, -.35) .. (4, 0);
			\draw (4, 0) -- (5, 0);
			\draw (3, 1) -- (3, 0);
		\end{tikzpicture}\\
		It is $Q$-cospectral to its partial transpose:\\
		\begin{tikzpicture}[scale = 1.5]
			\node at (-1, .5) {$G_1^\tau = $};
			\draw[fill] (0, 1) circle [radius = 1.5 pt];
			\node[below right] at (0,1) {$11$};
			\draw[fill] (1, 1) circle [radius = 1.5 pt];
			\node[below right] at (1,1) {$12$};
			\draw[fill] (2, 1) circle [radius = 1.5 pt];
			\node[below right] at (2,1) {$13$};
			\draw[fill] (3, 1) circle [radius = 1.5 pt];
			\node[below right] at (3,1) {$14$};
			\draw[fill] (4, 1) circle [radius = 1.5 pt];
			\node[below right] at (4,1) {$15$};
			\draw[fill] (5, 1) circle [radius = 1.5 pt];
			\node[below right] at (5,1) {$16$};
			\draw[fill] (0, 0) circle [radius = 1.5 pt];
			\node[below right] at (0, 0) {$21$};
			\draw[fill] (1, 0) circle [radius = 1.5 pt];
			\node[below right] at (1,0) {$22$};
			\draw[fill] (2, 0) circle [radius = 1.5 pt];
			\node[below right] at (2,0) {$23$};
			\draw[fill] (3, 0) circle [radius = 1.5 pt];
			\node[below right] at (3, 0) {$24$};
			\draw[fill] (4, 0) circle [radius = 1.5 pt];
			\node[below right] at (4, 0) {$25$};
			\draw[fill] (5, 0) circle [radius = 1.5 pt];
			\node[below right] at (5,0) {$26$};
			\draw (0, 1) -- (1, 1) -- (2, 1);
			\draw (0, 1) -- (0, 0);
			\draw (0, 0) -- (1, 1);
			\draw (1, 0) -- (2, 0);
			\draw (0, 1) .. controls (1, 1.35) .. (2, 1);
			\draw (0, 0) .. controls (1, -.35) .. (2, 0);
			\draw (2, 1) -- (3, 1) -- (4, 1);
			\draw (2, 0) -- (3, 0) -- (4, 0);
			\draw (3, 1) .. controls (4, 1.35) .. (5, 1);
			\draw (2, 0) .. controls (3, -.35) .. (4, 0);
			\draw (4, 0) -- (5, 0);
			\draw (3, 1) -- (3, 0);
		\end{tikzpicture} 
		
		As an another example, consider the following graph $G$ which is generated by the theorem 4.\\
		\begin{tikzpicture}[scale = 1.5]
			\node at (0, .5) {$G = $};
			\draw[fill] (1, 1) circle [radius = 1.5pt];
			\node[below right] at (1, 1) {$11$};
			\draw[fill] (2, 1) circle [radius = 1.5pt];
			\node[below right] at (2, 1) {$12$};
			\draw[fill] (3, 1) circle [radius = 1.5pt];
			\node[below right] at (3, 1) {$13$};
			\draw[fill] (1, 0) circle [radius = 1.5pt];
			\node[below right] at (1, 0) {$21$};
			\draw[fill] (2, 0) circle [radius = 1.5pt];
			\node[below right] at (2, 0) {$22$};
			\draw[fill] (3, 0) circle [radius = 1.5pt];
			\node[below right] at (3, 0) {$23$};
			\draw (3, 0) -- (2, 0) -- (1, 0) -- (1, 1) -- (2, 1) -- (3, 1);
			\draw (1, 1) -- (3, 0);
			\draw (1,1) .. controls (2, 1.25) .. (3, 1);
		\end{tikzpicture}
		\begin{tikzpicture}[scale = 1.5]
			\node at (0, .5) {$G^\tau = $};
			\draw[fill] (1, 1) circle [radius = 1.5pt];
			\node[below right] at (1, 1) {$11$};
			\draw[fill] (2, 1) circle [radius = 1.5pt];
			\node[below right] at (2, 1) {$12$};
			\draw[fill] (3, 1) circle [radius = 1.5pt];
			\node[below right] at (3, 1) {$13$};
			\draw[fill] (1, 0) circle [radius = 1.5pt];
			\node[below right] at (1, 0) {$21$};
			\draw[fill] (2, 0) circle [radius = 1.5pt];
			\node[below right] at (2, 0) {$22$};
			\draw[fill] (3, 0) circle [radius = 1.5pt];
			\node[below right] at (3, 0) {$23$};
			\draw (3, 0) -- (2, 0) -- (1, 0) -- (1, 1) -- (2, 1) -- (3, 1);
			\draw (1, 0) -- (3, 1);
			\draw (1,1) .. controls (2, 1.25) .. (3, 1);
		\end{tikzpicture}
		
		\begin{tikzpicture}[scale = 1.5]
			\node at (0, .5) {$G_1 = $};
			\draw[fill] (1, 1) circle [radius = 1.5pt];
			\node[below right] at (1, 1) {$11$};
			\draw[fill] (2, 1) circle [radius = 1.5pt];
			\node[below right] at (2, 1) {$12$};
			\draw[fill] (3, 1) circle [radius = 1.5pt];
			\node[below right] at (3, 1) {$13$};
			\draw[fill] (4, 1) circle [radius = 1.5pt];
			\node[below right] at (4, 1) {$14$};
			\draw[fill] (5, 1) circle [radius = 1.5pt];
			\node[below right] at (5, 1) {$15$};
			\draw[fill] (6, 1) circle [radius  = 1.5pt];
			\node[below right] at (6, 1) {$16$};
			\draw[fill] (1, 0) circle [radius = 1.5pt];
			\node[below right] at (1, 0) {$21$};
			\draw[fill] (2, 0) circle [radius = 1.5pt];
			\node[below right] at (2, 0) {$22$};
			\draw[fill] (3, 0) circle [radius = 1.5pt];
			\node[below right] at (3, 0) {$23$};
			\draw[fill] (4, 0) circle [radius = 1.5pt];
			\node[below right] at (4, 0) {$24$};
			\draw[fill] (5, 0) circle [radius = 1.5pt];
			\node[below right] at (5, 0) {$25$};
			\draw[fill] (6, 0) circle [radius  = 1.5pt];
			\node[below right] at (6, 0) {$26$};
			\draw (3, 0) -- (2, 0) -- (1, 0) -- (1, 1) -- (2, 1) -- (3, 1);
			\draw (1, 1) -- (3, 0);
			\draw (1,1) .. controls (2, 1.25) .. (3, 1);
			\draw (2, 1) .. controls (3, 1.25) .. (4, 1);
			\draw (4, 1) -- (5, 1);
			\draw (4, 1) .. controls (5, 1.25) .. (6, 1);
			\draw (2, 0) .. controls (3.5, -.25) .. (5, 0);
			\draw (4, 0) -- (5, 0) -- (6, 0);
		\end{tikzpicture}
		
		\begin{tikzpicture}[scale = 1.5]
			\node at (0, .5) {$G^\tau_1 = $};
			\draw[fill] (1, 1) circle [radius = 1.5pt];
			\node[below right] at (1, 1) {$11$};
			\draw[fill] (2, 1) circle [radius = 1.5pt];
			\node[below right] at (2, 1) {$12$};
			\draw[fill] (3, 1) circle [radius = 1.5pt];
			\node[below right] at (3, 1) {$13$};
			\draw[fill] (4, 1) circle [radius = 1.5pt];
			\node[below right] at (4, 1) {$14$};
			\draw[fill] (5, 1) circle [radius = 1.5pt];
			\node[below right] at (5, 1) {$15$};
			\draw[fill] (6, 1) circle [radius  = 1.5pt];
			\node[below right] at (6, 1) {$16$};
			\draw[fill] (1, 0) circle [radius = 1.5pt];
			\node[below right] at (1, 0) {$21$};
			\draw[fill] (2, 0) circle [radius = 1.5pt];
			\node[below right] at (2, 0) {$22$};
			\draw[fill] (3, 0) circle [radius = 1.5pt];
			\node[below right] at (3, 0) {$23$};
			\draw[fill] (4, 0) circle [radius = 1.5pt];
			\node[below right] at (4, 0) {$24$};
			\draw[fill] (5, 0) circle [radius = 1.5pt];
			\node[below right] at (5, 0) {$25$};
			\draw[fill] (6, 0) circle [radius  = 1.5pt];
			\node[below right] at (6, 0) {$26$};
			\draw (3, 0) -- (2, 0) -- (1, 0) -- (1, 1) -- (2, 1) -- (3, 1);
			\draw (1, 0) -- (3, 1);
			\draw (1,1) .. controls (2, 1.25) .. (3, 1);
			\draw (2, 1) .. controls (3, 1.25) .. (4, 1);
			\draw (4, 1) -- (5, 1);
			\draw (4, 1) .. controls (5, 1.25) .. (6, 1);
			\draw (2, 0) .. controls (3.5, -.25) .. (5, 0);
			\draw (4, 0) -- (5, 0) -- (6, 0);
		\end{tikzpicture}
	\end{example}

	The graphs $K$ and $K^\tau$, depicted in the example \ref{critical_graphs}, play a key role in all these above constructions. They are subgraphs of all these graphs. But there are graphs which are non-isomorphic and $Q$-cospectral to their partial transpose but do not contain $K$ and $K^\tau$ as their subgraphs. We construct a class of these graphs in the following procedure.
	
	\begin{procedure}\label{procedure5}
		Let $G_0$ be isomorphic to its partial transpose $G_0^\tau$ by the mapping $f: V(G_0) \rightarrow V(G_0^\tau)$ defined by $f(v_{1i}) = v_{2i}$ and $f(v_{2i}) = v_{1i}$ for $i = 1, 2, \dots q$. Also, let the set of partial asymmetry $\mathcal{A}(G_0) \neq \emptyset$. Now add equal number of vertices to both the clusters of $G_0$ and perform any one or more of the following operations:
		\begin{enumerate}
			\item 
				Add arbitrary set of edges joining the new vertices within the clusters.
			\item
				Consider a vertex $v_{ik}$ which is not incident to any edge in $\mathcal{A}(G_0)$. Join $v_{ik}$ with the new vertices with arbitrary edges.
			\item
				New edges can be included between the new vertices belonging to both clusters such that they form a partially symmetric subgraph.
		\end{enumerate} 
		The new graph $G$, after performing any or more of the above changes on $G_0$, is $Q$-cospectral to its partial transpose.
	\end{procedure}
	
	\begin{proof}
		This procedure is generalization of theorem \ref{theorem1} and procedure \ref{procedure4}. We can compare TIU subgraphs of $G$ and $G^\tau$ as we have done in theorem \ref{theorem1}. Also adding new vertices and edges follows procedure \ref{procedure4}.
	\end{proof}

	\begin{example}
		The graph $G_0$ is isomorphic to its partial transpose. Note that, structures of TU subgraphs remains unaltered after and before partial transpose. Also, $\mathcal{A}(G_0) = \{(v_{12}, v_{23})\}$.\\
		\begin{tikzpicture}[scale = 1.5]
			\node at (0, .5) {$G_0 = $};
			\draw[fill] (1, 1) circle [radius = 1.5 pt];
			\node[below right] at (1, 1) {$11$};
			\draw[fill] (2, 1) circle [radius = 1.5 pt];
			\node[below right] at (2, 1) {$12$};
			\draw[fill] (3, 1) circle [radius = 1.5 pt];
			\node[below right] at (3, 1) {$13$};
			\draw[fill] (1, 0) circle [radius = 1.5 pt];
			\node[below right] at (1, 0) {$21$};
			\draw[fill] (2, 0) circle [radius = 1.5 pt];
			\node[below right] at (2, 0) {$22$};
			\draw[fill] (3, 0) circle [radius = 1.5 pt];
			\node[below right] at (3, 0) {$23$};
			\draw (1, 1) -- (2, 1) -- (2, 0) -- (1, 0) -- (1, 1);
			\draw (2, 1) -- (3, 0);
			\draw (1, 1) .. controls (2, 1.35) .. (3, 1);
			\draw (1, 0) .. controls (2, -.35) .. (3, 0);
		\end{tikzpicture}
		\begin{tikzpicture}[scale = 1.5]
			\node at (0, .5) {$G^\tau_0 = $};
			\draw[fill] (1, 1) circle [radius = 1.5 pt];
			\node[below right] at (1, 1) {$11$};
			\draw[fill] (2, 1) circle [radius = 1.5 pt];
			\node[below right] at (2, 1) {$12$};
			\draw[fill] (3, 1) circle [radius = 1.5 pt];
			\node[below right] at (3, 1) {$13$};
			\draw[fill] (1, 0) circle [radius = 1.5 pt];
			\node[below right] at (1, 0) {$21$};
			\draw[fill] (2, 0) circle [radius = 1.5 pt];
			\node[below right] at (2, 0) {$22$};
			\draw[fill] (3, 0) circle [radius = 1.5 pt];
			\node[below right] at (3, 0) {$23$};
			\draw (1, 1) -- (2, 1) -- (2, 0) -- (1, 0) -- (1, 1);
			\draw (2, 0) -- (3, 1);
			\draw (1, 1) .. controls (2, 1.35) .. (3, 1);
			\draw (1, 0) .. controls (2, -.35) .. (3, 0);
		\end{tikzpicture}\\
		For simplicity, we add a node to both the clusters. We add $v_{11}$ to the new node in the cluster $C_1$. The new graph $G$ and its partial transpose $G^\tau$ are non-isomorphic and $Q$-cospectral, which are depicted below:\\
		\begin{tikzpicture}[scale = 1.25]
			\node at (0, .5) {$G = $};
			\draw[fill] (1, 1) circle [radius = 1.5 pt];
			\node[below right] at (1, 1) {$11$};
			\draw[fill] (2, 1) circle [radius = 1.5 pt];
			\node[below right] at (2, 1) {$12$};
			\draw[fill] (3, 1) circle [radius = 1.5 pt];
			\node[below right] at (3, 1) {$13$};
			\draw[fill] (4, 1) circle [radius = 1.5 pt];
			\node[below right] at (4, 1) {$14$};
			\draw[fill] (1, 0) circle [radius = 1.5 pt];
			\node[below right] at (1, 0) {$21$};
			\draw[fill] (2, 0) circle [radius = 1.5 pt];
			\node[below right] at (2, 0) {$22$};
			\draw[fill] (3, 0) circle [radius = 1.5 pt];
			\node[below right] at (3, 0) {$23$};
			\draw[fill] (4, 0) circle [radius = 1.5 pt];
			\node[below right] at (4, 0) {$24$};
			\draw (1, 1) .. controls (2.5, 1. 25) .. (4, 1);
			\draw (1, 1) -- (2, 1) -- (2, 0) -- (1, 0) -- (1, 1);
			\draw (2, 1) -- (3, 0);
			\draw (1, 1) .. controls (2, 1.35) .. (3, 1);
			\draw (1, 0) .. controls (2, -.35) .. (3, 0);
		\end{tikzpicture}
		\begin{tikzpicture}[scale = 1.25]
			\node at (0, .5) {$G^\tau = $};
			\draw[fill] (1, 1) circle [radius = 1.5 pt];
			\node[below right] at (1, 1) {$11$};
			\draw[fill] (2, 1) circle [radius = 1.5 pt];
			\node[below right] at (2, 1) {$12$};
			\draw[fill] (3, 1) circle [radius = 1.5 pt];
			\node[below right] at (3, 1) {$13$};
			\draw[fill] (4, 1) circle [radius = 1.5 pt];
			\node[below right] at (4, 1) {$14$};
			\draw[fill] (1, 0) circle [radius = 1.5 pt];
			\node[below right] at (1, 0) {$21$};
			\draw[fill] (2, 0) circle [radius = 1.5 pt];
			\node[below right] at (2, 0) {$22$};
			\draw[fill] (3, 0) circle [radius = 1.5 pt];
			\node[below right] at (3, 0) {$23$};
			\draw[fill] (4, 0) circle [radius = 1.5 pt];
			\node[below right] at (4, 0) {$24$};
			\draw (1, 1) .. controls (2.5, 1. 25) .. (4, 1);
			\draw (1, 1) -- (2, 1) -- (2, 0) -- (1, 0) -- (1, 1);
			\draw (2, 0) -- (3, 1);
			\draw (1, 1) .. controls (2, 1.35) .. (3, 1);
			\draw (1, 0) .. controls (2, -.35) .. (3, 0);
		\end{tikzpicture}\\
	\end{example}

	\section{Problems in future}
		
		The above discussion shows that partial transpose provides an useful tool in generating pair of non-isomorphic $Q$-cospectral graphs. One main challenge in this direction is to find out the vertex labelling such that $G$ and $G^\tau$ remains cospectral. Interested reader may try to construct non-isomorphic pair of normalised Laplacian cospectral graphs using this method.
		
		There are many other graphs which are $Q$-cospectral to their partial transpose, but do not follow the patterns, which we have discussed in the last two sections. Below we provide some of their examples. Interested readers may construct many such pairs of $Q$-cospectral graphs. Some of them we discuss below:
		\begin{enumerate}
			\item 
			The following graph $G$ is non-isomorphic and $Q$-cospectral to its partial transpose. Removing any or both of the edges $(v_{13}, v_{14})$ and $(v_{23}, v_{24})$ the resultant graph is non-isomorphic and $Q$-cospectral to its partial transpose.\\
			\begin{tikzpicture}[scale = 1.5]
			\node at (.5, .5) {$G = $};
			\draw[fill] (1, 1) circle [radius = 1.5pt];
			\node[below right] at (1, 1) {$11$};
			\draw[fill] (2, 1) circle [radius = 1.5pt];
			\node[below right] at (2, 1) {$12$};
			\draw[fill] (3, 1) circle [radius = 1.5pt];
			\node[below right] at (3, 1) {$13$};
			\draw[fill] (4, 1) circle [radius = 1.5pt];
			\node[below right] at (4, 1) {$14$};
			\draw[fill] (5, 1) circle [radius = 1.5pt];
			\node[below right] at (5, 1) {$15$};
			\draw[fill] (6, 1) circle [radius  = 1.5pt];
			\node[below right] at (6, 1) {$16$};
			\draw[fill] (1, 0) circle [radius = 1.5pt];
			\node[below right] at (1, 0) {$21$};
			\draw[fill] (2, 0) circle [radius = 1.5pt];
			\node[below right] at (2, 0) {$22$};
			\draw[fill] (3, 0) circle [radius = 1.5pt];
			\node[below right] at (3, 0) {$23$};
			\draw[fill] (4, 0) circle [radius = 1.5pt];
			\node[below right] at (4, 0) {$24$};
			\draw[fill] (5, 0) circle [radius = 1.5pt];
			\node[below right] at (5, 0) {$25$};
			\draw[fill] (6, 0) circle [radius  = 1.5pt];
			\node[below right] at (6, 0) {$26$};
			\draw (1, 1) .. controls (3.5, 1.25) .. (6, 1);
			\draw (6, 1) -- (1, 1) -- (1, 0) --(6, 0);
			\draw (1, 1) -- (6, 0);
			\end{tikzpicture}
			
			\begin{tikzpicture}[scale = 1.5]
			\node at (.5, .5) {$G^\tau = $};
			\draw[fill] (1, 1) circle [radius = 1.5pt];
			\node[below right] at (1, 1) {$11$};
			\draw[fill] (2, 1) circle [radius = 1.5pt];
			\node[below right] at (2, 1) {$12$};
			\draw[fill] (3, 1) circle [radius = 1.5pt];
			\node[below right] at (3, 1) {$13$};
			\draw[fill] (4, 1) circle [radius = 1.5pt];
			\node[below right] at (4, 1) {$14$};
			\draw[fill] (5, 1) circle [radius = 1.5pt];
			\node[below right] at (5, 1) {$15$};
			\draw[fill] (6, 1) circle [radius  = 1.5pt];
			\node[below right] at (6, 1) {$16$};
			\draw[fill] (1, 0) circle [radius = 1.5pt];
			\node[below right] at (1, 0) {$21$};
			\draw[fill] (2, 0) circle [radius = 1.5pt];
			\node[below right] at (2, 0) {$22$};
			\draw[fill] (3, 0) circle [radius = 1.5pt];
			\node[below right] at (3, 0) {$23$};
			\draw[fill] (4, 0) circle [radius = 1.5pt];
			\node[below right] at (4, 0) {$24$};
			\draw[fill] (5, 0) circle [radius = 1.5pt];
			\node[below right] at (5, 0) {$25$};
			\draw[fill] (6, 0) circle [radius  = 1.5pt];
			\node[below right] at (6, 0) {$26$};
			\draw (1, 1) .. controls (3.5, 1.25) .. (6, 1);
			\draw (6, 1) -- (1, 1) -- (1, 0) -- (6, 0);
			\draw (1, 0) -- (6, 1);
			\end{tikzpicture}
			\item
			Similarly, the graph depicted below is non-isomorphic, $Q$-cospectral to its partial transpose. After removing all the edges $(v_{12}, v_{13}), (v_{13}, v_{14}), (v_{22}, v_{23})$ and $(v_{23}, v_{24})$ the new graphs are non-isomorphic, $Q$-cospectral to their partial transpose. Note that, removing less than four of those edges do not generate such pairs.\\
			\begin{tikzpicture}[scale = 1]
			\node at (.5, .5) {$G = $};
			\draw[fill] (1, 1) circle [radius = 1.5pt];
			\node[below right] at (1, 1) {$11$};
			\draw[fill] (2, 1) circle [radius = 1.5pt];
			\node[below right] at (2, 1) {$12$};
			\draw[fill] (3, 1) circle [radius = 1.5pt];
			\node[below right] at (3, 1) {$13$};
			\draw[fill] (4, 1) circle [radius = 1.5pt];
			\node[below right] at (4, 1) {$14$};
			\draw[fill] (5, 1) circle [radius = 1.5pt];
			\node[below right] at (5, 1) {$15$};
			\draw[fill] (1, 0) circle [radius = 1.5pt];
			\node[below right] at (1, 0) {$21$};
			\draw[fill] (2, 0) circle [radius = 1.5pt];
			\node[below right] at (2, 0) {$22$};
			\draw[fill] (3, 0) circle [radius = 1.5pt];
			\node[below right] at (3, 0) {$23$};
			\draw[fill] (4, 0) circle [radius = 1.5pt];
			\node[below right] at (4, 0) {$24$};
			\draw[fill] (5, 0) circle [radius = 1.5pt];
			\node[below right] at (5, 0) {$25$};
			\draw (1, 1) .. controls (3, 1.25) .. (5, 1);
			\draw (5, 1) -- (1, 1) -- (1, 0) --(5, 0);
			\draw (1, 1) -- (5, 0);
			\end{tikzpicture}
			\begin{tikzpicture}[scale = 1]
			\node at (.5, .5) {$G^\tau = $};
			\draw[fill] (1, 1) circle [radius = 1.5pt];
			\node[below right] at (1, 1) {$11$};
			\draw[fill] (2, 1) circle [radius = 1.5pt];
			\node[below right] at (2, 1) {$12$};
			\draw[fill] (3, 1) circle [radius = 1.5pt];
			\node[below right] at (3, 1) {$13$};
			\draw[fill] (4, 1) circle [radius = 1.5pt];
			\node[below right] at (4, 1) {$14$};
			\draw[fill] (5, 1) circle [radius = 1.5pt];
			\node[below right] at (5, 1) {$15$};
			\draw[fill] (1, 0) circle [radius = 1.5pt];
			\node[below right] at (1, 0) {$21$};
			\draw[fill] (2, 0) circle [radius = 1.5pt];
			\node[below right] at (2, 0) {$22$};
			\draw[fill] (3, 0) circle [radius = 1.5pt];
			\node[below right] at (3, 0) {$23$};
			\draw[fill] (4, 0) circle [radius = 1.5pt];
			\node[below right] at (4, 0) {$24$};
			\draw[fill] (5, 0) circle [radius = 1.5pt];
			\node[below right] at (5, 0) {$25$};
			\draw (1, 1) .. controls (3, 1.25) .. (5, 1);
			\draw (5, 1) -- (1, 1) -- (1, 0) -- (5, 0);
			\draw (1, 0) -- (5, 1);
			\end{tikzpicture}
			\item
			The following pairs of graphs are also non-isomorphic and $Q$-cospectral determined by partial transpose.\\
			\begin{tikzpicture}[scale = 1]
			\node at (.5, .5) {$G = $};
			\draw[fill] (1, 1) circle [radius = 1.5pt];
			\node[below right] at (1, 1) {$11$};
			\draw[fill] (2, 1) circle [radius = 1.5pt];
			\node[below right] at (2, 1) {$12$};
			\draw[fill] (3, 1) circle [radius = 1.5pt];
			\node[below right] at (3, 1) {$13$};
			\draw[fill] (4, 1) circle [radius = 1.5pt];
			\node[below right] at (4, 1) {$14$};
			\draw[fill] (1, 0) circle [radius = 1.5pt];
			\node[below right] at (1, 0) {$21$};
			\draw[fill] (2, 0) circle [radius = 1.5pt];
			\node[below right] at (2, 0) {$22$};
			\draw[fill] (3, 0) circle [radius = 1.5pt];
			\node[below right] at (3, 0) {$23$};
			\draw[fill] (4, 0) circle [radius = 1.5pt];
			\node[below right] at (4, 0) {$24$};
			\draw (1, 1) .. controls (2, 1.25) .. (3, 1);
			\draw (3, 1) -- (1, 1) -- (1, 0);
			\draw (1, 1) -- (2, 0);
			\draw (1, 1) -- (3, 0);
			\draw (3, 0) -- (4, 0);
			\draw (1, 0) .. controls (2.5, -.5) .. (4, 0);
			\draw (2, 0) .. controls (3, -.25) .. (4, 0);
			\end{tikzpicture}
			\begin{tikzpicture}[scale = 1]
			\node at (.5, .5) {$G = $};
			\draw[fill] (1, 1) circle [radius = 1.5pt];
			\node[below right] at (1, 1) {$11$};
			\draw[fill] (2, 1) circle [radius = 1.5pt];
			\node[below right] at (2, 1) {$12$};
			\draw[fill] (3, 1) circle [radius = 1.5pt];
			\node[below right] at (3, 1) {$13$};
			\draw[fill] (4, 1) circle [radius = 1.5pt];
			\node[below right] at (4, 1) {$14$};
			\draw[fill] (1, 0) circle [radius = 1.5pt];
			\node[below right] at (1, 0) {$21$};
			\draw[fill] (2, 0) circle [radius = 1.5pt];
			\node[below right] at (2, 0) {$22$};
			\draw[fill] (3, 0) circle [radius = 1.5pt];
			\node[below right] at (3, 0) {$23$};
			\draw[fill] (4, 0) circle [radius = 1.5pt];
			\node[below right] at (4, 0) {$24$};
			\draw (1, 1) .. controls (2, 1.25) .. (3, 1);
			\draw (3, 1) -- (1, 1) -- (1, 0);
			\draw (1, 0) -- (2, 1);
			\draw (1, 0) -- (3, 1);
			\draw (3, 0) -- (4, 0);
			\draw (1, 0) .. controls (2.5, -.5) .. (4, 0);
			\draw (2, 0) .. controls (3, -.25) .. (4, 0);
			\end{tikzpicture}
		\end{enumerate}

	\section*{Acknowledgement}
		
		The author is thankful to Dr. Bibhas Adhikari, and Prof. Ravindra B. Bapat for a number of discussions.
		
%	\bibliographystyle{unsrt}
%	\bibliography{library}

\begin{thebibliography}{10}
		
		\bibitem{bapat2010graphs}
		Ravindra~B Bapat.
		\newblock {\em Graphs and matrices}, volume~27.
		\newblock Springer, 2010.
		
		\bibitem{van2003graphs}
		Edwin~R Van~Dam and Willem~H Haemers.
		\newblock Which graphs are determined by their spectrum?
		\newblock {\em Linear Algebra and its applications}, 373:241--272, 2003.
		
		\bibitem{von1957spektren}
		Lothar Von~Collatz and Ulrich Sinogowitz.
		\newblock Spektren endlicher grafen.
		\newblock In {\em Abhandlungen aus dem Mathematischen Seminar der
			Universit{\"a}t Hamburg}, volume~21, pages 63--77. Springer, 1957.
		
		\bibitem{babai2016graph}
		L{\'a}szl{\'o} Babai.
		\newblock Graph isomorphism in quasipolynomial time.
		\newblock In {\em Proceedings of the forty-eighth annual ACM symposium on
			Theory of Computing}, pages 684--697. ACM, 2016.
		
		\bibitem{cvetkovic2009towards}
		Drago{\v{s}} Cvetkovi{\'c} and Slobodan~K Simi{\'c}.
		\newblock Towards a spectral theory of graphs based on the signless laplacian,
		i.
		\newblock {\em Publications de l'Institut Mathematique}, 85(99):19--33, 2009.
		
		\bibitem{godsil1982constructing}
		Chris~D Godsil and BD~McKay.
		\newblock Constructing cospectral graphs.
		\newblock {\em Aequationes Mathematicae}, 25(1):257--268, 1982.
		
		\bibitem{carvalho2017exponentially}
		Jo{\~a}o Carvalho, Bruna~S Souza, Vilmar Trevisan, and Fernando~C Tura.
		\newblock Exponentially many graphs have a q-cospectral mate.
		\newblock {\em Discrete Mathematics}, 340(9):2079--2085, 2017.
		
		\bibitem{peres1996}
		Asher Peres.
		\newblock Separability criterion for density matrices.
		\newblock {\em Physical Review Letters}, 77(8):1413, 1996.
		
		\bibitem{horodecki1997}
		Pawel Horodecki.
		\newblock Separability criterion and inseparable mixed states with positive
		partial transposition.
		\newblock {\em arXiv preprint quant-ph/9703004}, 1997.
		
		\bibitem{wu2006conditions}
		Chai~Wah Wu.
		\newblock Conditions for separability in generalized laplacian matrices and
		diagonally dominant matrices as density matrices.
		\newblock {\em Physics Letters A}, 351(1-2):18--22, 2006.
		
		\bibitem{hildebrand2008combinatorial}
		Roland Hildebrand, Stefano Mancini, and Simone Severini.
		\newblock Combinatorial laplacians and positivity under partial transpose.
		\newblock {\em Mathematical Structures in Computer Science}, 18(1):205--219,
		2008.
		
		\bibitem{dutta2016bipartite}
		Supriyo Dutta, Bibhas Adhikari, Subhashish Banerjee, and R.~Srikanth.
		\newblock Bipartite separability and nonlocal quantum operations on graphs.
		\newblock {\em Phys. Rev. A}, 94:012306, Jul 2016.
		
		\bibitem{dutta2018construction}
		Supriyo Dutta and Bibhas Adhikari.
		\newblock Construction of cospectral graphs.
		\newblock {\em arXiv:1808.03490}, August 2018.
		
		\bibitem{guo2017coefficients}
		Ji-Ming Guo, Jianxi Li, Peng Huang, and Wai~Chee Shiu.
		\newblock Coefficients of the characteristic polynomial of the (signless,
		normalized) laplacian of a graph.
		\newblock {\em Graphs and Combinatorics}, pages 1--10, 2017.
		
		\bibitem{cvetkovic2007signless}
		Drago{\v{s}} Cvetkovi{\'c}, Peter Rowlinson, and Slobodan~K Simi{\'c}.
		\newblock Signless laplacians of finite graphs.
		\newblock {\em Linear Algebra and its applications}, 423(1):155--171, 2007.
		
		\bibitem{schult2008exploring}
		Daniel~A Schult and P~Swart.
		\newblock Exploring network structure, dynamics, and function using networkx.
		\newblock In {\em Proceedings of the 7th Python in Science Conferences (SciPy
			2008)}, volume 2008, pages 11--16, 2008.
		
	\end{thebibliography}

\end{document}